\numberwithin{equation}{section}
\numberwithin{figure}{section}
\titleformat{\subsection}[runin]
{\bfseries} {\thesubsection{.}}{0.15cm}{}[.]
\titleformat{\subsubsection}[runin]
{\em}{\thesubsubsection{.}}{0.15cm}{}[.]
\newtheorem{theorem}{Theorem}[section]
\newtheorem{lemma}[theorem]{Lemma}
\newtheorem{corollary}[theorem]{Corollary}
\theoremstyle{definition}
\newtheorem{definition}[theorem]{Definition}
\newtheorem{remark}[theorem]{Remark}
\newtheorem{example}[theorem]{Example}
\newcommand\be{\mathbf{e}}
\newcommand\bv{\mathbf{v}}
\newcommand\bx{\mathbf{x}}
\def\bH{\mathbf{H}}
\def\bN{\mathbf{N}}
\newcommand\Cscr{\mathscr{C}}
\newcommand\Dscr{\mathscr{D}}
\newcommand\Gscr{\mathscr{G}}
\newcommand\C{\mathbb{C}}
\newcommand\CP{\mathbb{CP}}
\newcommand\D{\mathbb D}
\newcommand\N{\mathbb{N}}
\newcommand\R{\mathbb{R}}
\newcommand\T{\mathbb{T}}
\newcommand\Z{\mathbb{Z}}
\newcommand\cd{\overline{\mathbb D}}
\newcommand\igot{\mathfrak{i}}
\renewcommand\igot{\mathfrak{i}}
\newcommand\pgot{\mathfrak{p}}
\newcommand\ggot{\mathfrak{g}}
\newcommand\Igot{\mathfrak{I}}
\newcommand\E{\mathrm{e}}
\renewcommand\imath{\igot}
\newcommand\hra{\hookrightarrow}
\newcommand\lra{\longrightarrow}
\newcommand\wt{\widetilde}
\newcommand\di{\partial}
\renewcommand\div{\mathrm{div}}
\newcommand\Span{\mathrm{span}}
\newcommand\Laplace{\Delta}
\newcommand\nullq{{\mathbf A}}
\newcommand\Area{\mathrm{Area}}
\newcommand\TC{\mathrm{TC}}
\begin{document}

\fancyhead[LO]{Minimal surfaces for undergraduates}
\fancyhead[RE]{F.\ Forstneri\v c} 
\fancyhead[RO,LE]{\thepage}

\thispagestyle{empty}

%% Title
%\vspace*{5mm}

\begin{center}
{\bf\LARGE Minimal surfaces for undergraduates}

\vspace*{0.5cm}

%% Authors
{\large\bf  Franc Forstneri{\v c}} 
\end{center}

\vspace*{0.5cm}

{\small
\noindent {\bf Abstract}\hspace*{0.1cm}
In this article we present an elementary introduction to the theory of 
minimal surfaces in Euclidean spaces $\R^n$ for $n\ge 3$ by using only elementary calculus 
of functions of several variables at the level of a typical second-year undergraduate 
analysis course for students of Mathematics at European universities. 
No prior knowledge of differential geometry is assumed.
}

\noindent{\bf Keywords}\hspace*{0.1cm} Minimal surface, Euler-Lagrange equation, Riemann surface 
\vspace*{0.1cm}

\noindent{\bf MSC (2020):}\hspace*{0.1cm}  Primary 53A10
%
%  FF: these are good classification numbers
%
%  32B15: (1973-now) Analytic subsets of affine space
%  32H05: Holomorphic mappings, embeddings,...
%
%  32E10: Stein manifolds
%  32E30: (1973-now) Holomorphic and polynomial approximation, Runge pairs, interpolation
%  32H02: (1991-now) Holomorphic mappings, (holomorphic) embeddings and related questions
%  32M17: Automorphism groups of Cn and affine manifolds 
%  32Q56: Oka principle and Oka manifolds 
%

\noindent {\bf Date: \rm 7 January 2021}

%%%%%%%%%%
%%%%%%%%%%
%%%%%%%%%%
%%%%%%%%%%
%%%%%%%%%%
%%%%%%%%%%

\section{Introduction}
\label{sec:intro}

Minimal surfaces are among the most beautiful and aesthetically pleasing geometric objects,
which are also of major physical importance.
These are surfaces in space which locally minimize the area, in the sense that any small enough piece
of the surface has the smallest area among all surfaces with the same boundary.
They appear naturally in the physical world. Laws of physics imply that a soap
film spanned by a given boundary curve assumes the shape of a minimal surface.
Furthermore, the most natural parameterization of a minimal surface by a smooth map, 
say from a planar domain, from the physical viewpoint is a conformal map, i.e., 
one which preserves angles between tangent vectors. The reason is that 
a conformal parameterization minimizes the total energy and makes 
the internal tension uniformly spread over the surfaces; see Remark \ref{rem:conformal}. 
This can be illustrated by the following experiment. Take a piece of cloth in the shape of a disc and made 
of elastic material. Now, stretch it as a curtain in the $3$-space with the boundary circle 
attached to a closed curve $C\subset\R^3$, but allowing it to slide freely along $C$.
It will assume the shape a minimal surface $S\subset \R^3$ with boundary $C$, 
and the position of points from the original disc inside this surface $S$ will yield a 
conformal parameterization of $S$ by the disc. 
It is however important that the boundary of the disc is not attached in a fixed way to 
the curve $C$, for otherwise a conformal parameterization cannot be achieved in general.
Such experiments, with soap bubbles as curtains, were conducted by 
Joseph Plateau in 1873, and they gave rise to the famous {\em Plateau problem} conjecturing
that any closed Jordan curve in $\R^3$ spans a minimal surface (in fact, a minimal disc). 
Plateau's conjecture was confirmed by Tibor Rad{\'o} \cite{Rado1930,Rado1930MZ} (1930) 
and Jesse Douglas \cite{Douglas1932} (1932).  

Minimal surfaces appear in a variety of applications. 
They are studied in any Riemannian manifold of dimension at least three, that is, 
a manifold with a smooth field of inner products on their tangent spaces. 
Holomorphic curves in complex Euclidean spaces $\C^n$ for $n>1$ are rather 
special examples minimal surfaces.

Unfortunately, this beautiful subject is not easily accessible to  
undergraduate students of Mathematics who already know the basic differential calculus 
of functions of several variables, but have not been exposed yet to Riemannian geometry. 
Presentations in standard texts (see 
\cite{Lawson1980,Osserman1986,BarbosaColares1986,Nitsche1989,ColdingMinicozzi1999,ColdingMinicozzi2011}, 
among others) either assume or develop several prerequisites from differential geometry 
before dealing with this topic, and the amount of necessary background material quickly becomes overwhelming. 

The immediate aim of writing this expository article was a desire to present rudiments of the theory of
minimal surfaces in a third year analysis course on differential equations and the 
calculus of variations. My intention was to discuss minimal surfaces 
as one of the examples of the Euler-Lagrange equation, but 
explaining also the geometric meaning of the resulting equation 
in terms of the vanishing of the mean curvature of the surface. Furthermore,
I wished to explore the role of conformal parameterization of the surface and the connection to
complex analysis via the Ennepper-Weierstrass formula. The latter opens the way 
to applying complex analytic methods in the theory of minimal surfaces, thereby
exposing a close connection between these two fields. These methods are highly efficient
as can be seen from the monographs \cite{Osserman1986,AlarconForstnericLopez2021book},
the AMS Memoir \cite{AlarconForstnericLopezMAMS}, and the recent survey
\cite{AlarconForstneric2019JAMS}.  All I could afford within the given syllabus were four or five lectures. 
This led me to develop an elementary presentation based on the following two principles.

\begin{itemize}
\item All calculations are done with the second order Taylor polynomials of relevant 
functions at a given base point, without an attempt to develop global formulas.
\item Metric notions such as length, area, and curvature coincide in any Euclidean coordinate system.
Explicitly, fixing a reference coordinate system on an affine space, any coordinate system 
obtained from the initial one by applying translations and orthogonal rotations
(the isometries of the Euclidean metric) is equally good.
\end{itemize}

By following these principles, I offer here an approach 
to the basics of the theory of minimal surfaces in Euclidean spaces $\R^n$ 
which is suitable for third year undergraduate 
students with no prior exposure to differential geometry.
The same principles apply in any Riemannian manifold; however, 
things become more involved since the derivatives of the metric enter
the picture, and the connection to complex analysis is lost in general.

%%%%%%%%%%
%%%%%%%%%%
%%%%%%%%%%
%%%%%%%%%%
%%%%%%%%%%
%%%%%%%%%%

\section{Graphs with minimal area}
\label{sec:graphs}

In this section we derive Lagrange's equation of minimal graphs, which is
one of the first examples in the calculus of variations for functions of more
than one variable. 

Let $D$ be a bounded domain  in the plane $\R^2$ with piecewise $\Cscr^1$ boundary $bD$. 
Given a function $f:\overline D\to \R$ of class $\Cscr^2$ on the closure of $D$, its graph 
\begin{equation}\label{eq:graphf}
	G_f = \bigl\{(x,y,z)\in \R^3: (x,y)\in \overline D,\ z=f(x,y)\bigr\}
\end{equation}
has area equal to
\begin{equation}\label{eq:Areaf}
	\Area(f) = \int_D \sqrt{1+f_x^2+f_y^2} \, dxdy = \int_D \sqrt{1+|\nabla f|^2} \, dxdy.
\end{equation}
Here, $f_x$ and $f_y$ denote the partial derivatives with respect to the indicated variables.

We are interested in finding functions $f$ for which the area is the smallest among all nearby
graphs over $\overline D$ having the same boundary values $f|_{bD}:bD\to \R$.
The first step is to understand when is $f$ a stationary point of the area functional.
To answer this question, we consider deformations of $f$ which are fixed on the boundary $bD$.
Pick a $\Cscr^1$ function $h:\overline D\to\R$ vanishing on $bD$ and
consider the function 
\[	
	\R\ni s\ \longmapsto\ \Area(f+sh)\in \R_+.
\]
Then, $f$ is a stationary point of the area functional \eqref{eq:Areaf} among all graphs 
over $\overline D$ with given boundary values if and only if 
\[ %begin{equation}\label{eq:stationary}
	\frac{d}{ds}\Big|_{s=0} \Area(f+sh)=0
\] %end{equation}
holds for all such functions $h$. This expression equals
\begin{eqnarray}
	\frac{d}{ds}\Big|_{s=0} \Area(f+sh) &=& 
	\int_D \frac{d}{ds}\Big|_{s=0} \sqrt{1+(f_x+sh_x)^2+(f_y+sh_y)^2} \,dxdy 
	\cr \cr
	&=& \int_D \frac{f_xh_x+f_yh_y}{\sqrt{1+|\nabla f|^2}} \, dxdy. 
	\label{eq:calculation}
\end{eqnarray}
Integrating both summands by parts, we replace $h_x$ and $h_y$ by $h$
and put the respective derivative on the other term in the product. 
Since $h$ vanishes on $bD$, this gives 
\begin{equation}\label{eq:integralzero}
	\frac{d}{ds}\Big|_{s=0} \Area(f+sh)  = 
	- \int_D \left( \frac{\di}{\di x} \frac{f_x}{\sqrt{1+|\nabla f|^2}} +
	\frac{\di}{\di y} \frac{f_y}{\sqrt{1+|\nabla f|^2}} \right) h\, dxdy.
\end{equation}
This expression vanishes for all functions $h$ as above if and only if
\begin{equation}\label{eq:mingraph0}
	\frac{\di}{\di x} \frac{f_x}{\sqrt{1+|\nabla f|^2}} +
	\frac{\di}{\di y} \frac{f_y}{\sqrt{1+|\nabla f|^2}} = 0
\end{equation}
holds on $D$. Indeed, if this expression is nonzero at some point 
$(x_0,y_0)\in D$, say positive, then it is positive on a neighbourhood $U\subset D$ due
to continuity, and by choosing $h\ge 0$ to have support contained in $U$ 
and to satisfy $h(x_0,y_0)>0$, the integral on the right hand side of 
\eqref{eq:integralzero} is negative, so $f$ is not stationary. The equation  
\eqref{eq:mingraph0} can be written in the form
\[
	\div \left( \frac{\nabla f}{\sqrt{1+|\nabla f|^2}} \right)
	=\frac{(1+f_y^2) f_{xx}-2 f_x  f_y f_{xy}+(1+f_x^2) f_{yy}}{\left(1+|\nabla f|^2  \right)^{3/2}}=0,
\]
which is equivalent to 
\begin{equation}\label{eq:mingraph1}
	(1+f_y^2) f_{xx}-2 f_x  f_y f_{xy}+(1+f_x^2) f_{yy}=0.
\end{equation}

This calculation was made by J.-L.\ Lagrange in 1760. The second order %elliptic 
quasilinear partial differential equation \eqref{eq:mingraph1} is known as the 
{\em equation of minimal graphs}, or the {\em minimal graph equation}. 
It is the {\em Euler-Lagrange equation}\index{Euler-Lagrange equation} 
for the area functional \eqref{eq:Areaf}. 

One may consider more general deformations of a given graph, with the same result.
Let $F(x,y,s)$ be a $\Cscr^2$ function on $\overline D \times (-\epsilon, +\epsilon)$ for some 
$\epsilon>0$ such that 
\begin{eqnarray*}\label{eq:generalvariation1}
	F(x,y,0) &=& f(x,y)\ \ \text{for all $(x,y)\in \overline D$, and} 
	\\
	\label{eq:generalvariation2}
	F(x,y,s) &=& f(x,y)\ \ \text{for all $(x,y)\in bD$ and $s\in (-\epsilon, +\epsilon)$}.
\end{eqnarray*}
Write 
\[
	F_s(x,y)=f(x,y)+ s h(x,y) + o(s),\quad h(x,y)=\frac{\di}{\di s}\Big|_{s=0} F_s(x,y).
\]
The same calculation which leads to \eqref{eq:calculation} then show that 
\[ 
	\frac{d}{ds}\Big|_{s=0} \Area(F_s)=\frac{d}{ds}\Big|_{s=0}\Area(f+sh).
\] 
This justifies the restriction to deformations which are linear in the parameter $s$ when 
deriving the minimal graph equation.

A natural question at this point is whether there exists a solution of 
the minimal graph equation \eqref{eq:mingraph1} with prescribed
continuous boundary values over $bD$, and if so, how many are there. 
This Dirichlet problem for the minimal graph equation was solved affirmatively for
any bounded convex domain $D\subset \R^2$ by T.\ Rad\'o \cite{Rado1930MZ} 
in 1930; an alternative proof was given by H.\ Jenkins and J.\ Serrin 
\cite{JenkinsSerrin1966} in 1966. The solution is unique and is an absolute area minimizer
among all surfaces with the given boundary. This is an important special case of
the Plateau problem mentioned in the introductory section.

%%%%%%%%%%%%%%%%
%
%	CURVATURE
%
%%%%%%%%%%%%%%%%

\section{Curvature of surfaces}\label{sec:curvature}

In order to explain the geometric meaning of the minimal graph equation, 
we shall need the notion of principal curvatures and mean 
curvature of a surface in the Euclidean $3$-space $\R^3$. 
In coordinates $x=(x_1,x_2,\ldots,x_n)$ on $\R^n$ the Euclidean metric is given by 
\[
	ds^2=dx_1^2+dx_2^2+\cdots + dx_n^2.
\]
Its value on any pair of vectors $\xi=(\xi_1,\xi_2,\ldots,\xi_n)$
and $\eta=(\eta_1,\eta_2,\ldots,\eta_n)$ in $\R^n$ is $\sum_{i=1}^n\xi_i\eta_i = \xi\,\cdotp\eta$,
the standard Euclidean inner product of $\xi$ and $\eta$.

Let us observe the following, where the first item may be adopted as an axiom. 

\begin{itemize}
\item The curvature of an object (say a curve or a surface) is invariant under affine linear
maps $\R^n\to\R^n$ of the form $x\mapsto Ax+b$, where $b\in\R^n$ and $A\in O_n(\R)$ 
is an element of the orthogonal group on $\R^n$. Such maps are called {\em rigid}, and they are 
precisely the isometries of the Euclidean metric on $\R^n$. 
\smallskip
\item Every smooth curve $C$ can locally at any point $p\in C$ be represented as a
graph over its tangent line $T_p C$. The analogous property holds for smooth surfaces.
\end{itemize}

Hence, to explain the notion of curvature of a smooth plane curve $C\subset \R^2$ at a 
point $p\in C$, it suffices to apply a rigid change of coordinates in $\R^2$ taking $p$ to $(0,0)$ and the
tangent line $T_pC$ to the $x$-axis, so locally near $(0,0)$ the curve is
the graph $y=f(x)$ of a smooth function on an interval around $0\in\R$,  
with $f(0)=f'(0)=0$. (It will suffice to work with twice continuously differentiable functions.) 
The Taylor expansion of $f$ at $0$ is then 
\begin{equation}\label{eq:graphC}
	y= f(x) = \frac{1}{2} f''(0) x^2 + o(x^2).
\end{equation}
Let us find the circle which agrees with this graph to the second order at $(0,0)$.
Clearly, such a circle has centre on the $y$-axis, and hence is of the form 
\begin{equation}\label{eq:circle}
	x^2+(y-r)^2=r^2
\end{equation}
for some $r\in\R\setminus \{0\}$, unless $f''(0)=0$ when the $x$-axis $y=0$ 
(a circle of infinite radius) does the job. 
Solving the equation \eqref{eq:circle} on $y$ near $(0,0)$ gives
\[
	y=r-\sqrt{r^2-x^2}= r-r\sqrt{1-\frac{x^2}{r^2}} 
	= r-r \left(1 -  \frac{x^2}{2r^2} + o(x^2) \right)  
	= \frac{1}{2r} x^2+ o(x^2).
\] 
A comparison with \eqref{eq:graphC} shows that for $f''(0)\ne 0$ the number
\[
	r=1/f''(0) \in \R\setminus \{0\}
\]
is the unique number for which the circle \eqref{eq:circle} agrees with the curve \eqref{eq:graphC}
to the second order at $(0,0)$. This best fitting circle is called the {\em osculating circle}.
The number 
\begin{equation}\label{eq:kappa}
	\kappa=f''(0)=1/r 
\end{equation}
is the {\em signed curvature} of the curve \eqref{eq:graphC} at $(0,0)$, 
its absolute value $|\kappa|=|f''(0)|\ge 0$ is the {\em curvature},
and $|r|=1/|\kappa|=1/|f''(0)|$ is the {\em curvature radius}. If $f''(0)=0$ then
the curvature is zero and the curvature radius is $+\infty$.
The osculating circle lies in the upper half-plane $y\ge 0$ if $f''(0)>0$, and 
in the lower half-plane $y\le 0$ if $f''(0)<0$. 

Consider now a smooth surface $S\subset \R^3$.
Let $(x,y,z)$ be coordinates on $\R^3$.  Fix a point $p\in S$. A rigid change of coordinates gives
$p=(0,0,0)$ and $T_pS=\{z=0\}=\R^2\times \{0\}$. Then, $S$ is locally near the origin 
a graph of the form 
\begin{equation}\label{eq:graphS}
	z=f(x,y)= \frac{1}{2}\left( f_{xx}(0)x^2 + 2f_{xy}(0,0)xy + f_{yy}(0)y^2\right) + o(x^2+y^2).
\end{equation}
The symmetric matrix 
\begin{equation}\label{eq:Hessian}
	A=\left(\begin{matrix} f_{xx}(0,0) &  f_{xy}(0,0) \\ f_{xy}(0,0)  & f_{yy}(0,0) \end{matrix}\right)
\end{equation}
is called the {\em Hessian matrix} of $f$ at $(0,0)$.
%, and it determines a quadratic form on $\R^2$ called the {\em Hessian form} of $f$ at $(0,0)$. 

Given a unit vector $v=(v_1,v_2)$ in the $(x,y)$-plane, 
let $\Sigma_v$ be the $2$-plane through $0\in \R^3$ spanned by $v$ and the $z$-axis.
The intersection $C_v := S\cap \Sigma_v$ is a curve in $S$ given by
\begin{equation}\label{eq:Cv}
	z= f(v_1t,v_2t) = \frac{1}{2} (Av\,\cdotp v) t^2  + o(t^2) 
\end{equation}
for $t\in \R$ near $0$. Since $|v|=1$, the parameters $(t,z)$ on the $\Sigma_v$ 
are Euclidean parameters, i.e., the Euclidean metric $ds^2$ on $\R^3$ restricted to the plane $\Sigma_v$
is given by $dt^2+dz^2$. From our discussion of curves and the formula \eqref{eq:kappa},  
we infer that the number 
\[
	\kappa_v = Av\,\cdotp v = f_{xx}(0)v_1^2 + 2f_{xy}(0,0)v_1v_2 + f_{yy}(0)v_2^2  
\]
is the signed curvature of the curve $C_v$ at the point $(0,0)$. 

On the unit circle $|v|^2=v_1^2+v_2^2=1$ the quadratic form $v\mapsto Av\,\cdotp v$
reaches its maximum $\kappa_1$ and minimum $\kappa_2$; these are the {\em principal curvatures} of 
the surface $S$ given by \eqref{eq:graphS} at $(0,0)$. Since the matrix $A$ is symmetric, 
$\kappa_1$ and $\kappa_2$ are its eigenvalues. The real numbers
\begin{equation}\label{eq:curvatures}
	H=\kappa_1+\kappa_2 = \mathrm{trace}\, A, \qquad K=\kappa_1 \kappa_2 =\det A
\end{equation}
are, respectively, the {\em mean curvature} and the {\em Gaussian curvature} of $S$ at $(0,0)$. 
(Sometimes the number $\frac12(\kappa_1+\kappa_2)$ is called the mean curvature.)

Note that the trace of $A$ \eqref{eq:Hessian} equals the Laplacian 
$\Laplace f (0,0)=f_{xx}(0,0)+f_{yy}(0,0)$. On the other hand, 
the trace of a matrix is the sum of its eigenvalues. This implies
\begin{equation}\label{eq:LaplaceisH}
	\Laplace f (0,0) =\kappa_1+\kappa_2 = H.
\end{equation}

% \begin{comment}
Since the matrix $A$ is symmetric, the eigenvectors of $A$ corresponding to
the eigenvalues $\kappa_1$ and $\kappa_2$ are orthogonal. 
By an orthogonal rotation in the $(x,y)$-plane we can map these vectors to
$(1,0)$ and $(0,1)$, so the equation \eqref{eq:graphS} of the surface 
obtains the normal form
\begin{equation}\label{eq:normalS}
	z= f(x,y) = \frac{1}{2} \left( \kappa_1 x^2 + \kappa_2 y^2\right) + o(x^2+y^2).
\end{equation}

%%%%%%%%%%%%%%%%%%%%%%%%%%%%%%%%%%%
%
% 	GEOMETRIC INTERPRETATION OF THE MINIMAL GRAPH EQUATION
%
%%%%%%%%%%%%%%%%%%%%%%%%%%%%%%%%%%%
\section{Geometric interpretation of the minimal graph equation}\label{sec:mean}

We are now ready to prove the following theorem, due to Meusnier (1776), which
provides a geometric interpretation of the minimal graph equation  \eqref{eq:mingraph1}.

%
%   MEUSNIER
%
\begin{theorem}\label{th:Meusnier}
A $\Cscr^2$ function $f:D\to \R$ on a domain $D\subset\R^2$ satisfies the minimal graph equation 
\begin{equation}\label{eq:MGE}
	\Gscr(f):=(1+f_y^2) f_{xx}-2 f_x  f_y f_{xy}+(1+f_x^2) f_{yy}=0
\end{equation}

\noindent if and only if its graph $S=G_f$ \eqref{eq:graphf} has vanishing mean curvature at every point. 
\end{theorem}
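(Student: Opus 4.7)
The plan is to invoke the rigid-motion invariance from Section 3 to reduce the statement at an arbitrary point of the graph to the special case of a horizontal tangent plane, where by \eqref{eq:LaplaceisH} the mean curvature is simply the Laplacian of the defining function.

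Fix $(x_0, y_0) \in D$ and set $p = (x_0, y_0, f(x_0, y_0)) \in S$. After translating we may assume $(x_0, y_0) = (0,0)$, $f(0,0) = 0$, and $p = 0$. When $\nabla f(0,0) = 0$, the graph is already in the normal form of Section 3 at the origin, so \eqref{eq:LaplaceisH} gives $H(p) = f_{xx}(0,0) + f_{yy}(0,0)$; this coincides with $\Gscr(f)(0,0)$, since at such a point the gradient terms of $\Gscr(f)$ drop out, and the equivalence is trivial. For the general case, write $W = \sqrt{1 + |\nabla f(0,0)|^2} > 0$ and choose $R \in O_3(\R)$ carrying the unit normal $W^{-1}(-f_x(0,0), -f_y(0,0), 1)$ of $S$ at $0$ to the vertical vector $(0,0,1)$. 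Since $R$ is an isometry of $\R^3$, it preserves both $S$ as a geometric object and its mean curvature at $0$. The rotated surface $R(S)$ has horizontal tangent plane at the origin, so by the implicit function theorem it is locally a graph $\tilde z = \tilde f(\tilde x, \tilde y)$ with $\tilde f(0,0) = 0$ and $\nabla \tilde f(0,0) = 0$, and applying the special case to $\tilde f$ gives $H(p) = \Delta \tilde f(0,0)$.

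The theorem therefore reduces to the explicit second-order identity
\[
\Delta \tilde f(0,0) \,=\, \frac{\Gscr(f)(0,0)}{W^3},
\]
whose positive factor $W^3$ forces the two sides to vanish simultaneously. Verifying this identity is the main obstacle. I would simplify the calculation by first applying a rotation in the $(x,y)$-plane to arrange $f_y(0,0) = 0$; note that $\Gscr(f)/W^3$ equals $\mathrm{div}(\nabla f/W)$ (as was observed immediately below \eqref{eq:mingraph0}) and is hence invariant under planar orthogonal changes of coordinates, so this reduction does not affect the statement. With this simplification $R$ reduces to a rotation in the $(x,z)$-plane through the angle $\theta$ with $\tan\theta = f_x(0,0)$, and the relation $z = f(x,y)$ rewrites as an implicit equation for $\tilde f(\tilde x, \tilde y)$. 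Differentiating this implicit relation twice at the origin and summing the two pure second derivatives $\tilde f_{\tilde x \tilde x}(0,0)$ and $\tilde f_{\tilde y \tilde y}(0,0)$ produces the target identity after routine algebra. Every other step of the proof is structural.
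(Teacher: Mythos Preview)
Your proposal is correct and follows essentially the same route as the paper: reduce to the critical-point case via a rigid motion, first rotating in the $(x,y)$-plane to arrange $f_y(0,0)=0$, then tilting in the $(x,z)$-plane, and finally computing the second derivatives of the new graph function by implicit differentiation to obtain $\Gscr(f)(0,0)=W^{3}\,\Delta\tilde f(0,0)$. Your use of the divergence form $\mathrm{div}(\nabla f/W)$ to justify invariance of $\Gscr(f)/W^{3}$ under the preliminary planar rotation is a clean touch that the paper leaves implicit.
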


\begin{proof}
Fix a point $p_0=(x_0,y_0)\in D$. Choose Euclidean coordinates on $\R^3$ 
which respect the $z$ direction (to keep the graph property) 
such that $p_0=(0,0)\in\R^2$, $f(0,0)=0$, and 
\[
	f(x,y)=ax+O(x^2+y^2), \quad a\ge 0.
\]  
If $p_0$ is a critical point of $f$, 
i.e., $f_x(p_0)=f_y(p_0) = 0$, then clearly $\Gscr(f)(p_0)= \Laplace f(p_0)= H$
where $H$ is the mean curvature of the graph $S=G_f$ at $p_0$ and the 
second equality holds by \eqref{eq:LaplaceisH}. 
Hence, the two conditions in the theorem are equivalent at such a point.
(It would be tempting to achieve $a=0$ by a rigid change of coordinates;
however, the orthogonal rotation on $\R^3$ which accomplishes this task 
does not preserve the graph condition.) Consider the orthonormal basis of $\R^3$ given by
\[
	\bv_1=\frac{1}{\sqrt{1+a^2}}(1,0,a),\quad \bv_2 = (0,1,0),\quad \bv_3=\frac{1}{\sqrt{1+a^2}}(-a,0,1).
\]
Then, $T_0 S=\Span\{\bv_1,\bv_2\}$ and $\bv_3$ is normal to $T_0S$. Let $(u,v,w)$ be the Euclidean 
coordinates associated to this basis, so the original coordinates are given by
\[
	(x,y,z)=u\bv_1+v\bv_2+w\bv_3.
\]
In coordinates $(u,v,w)$ the surface $S$ is given locally near the origin as a graph
\[
	w= g(u,v),\quad g(0,0)=0,\quad dg(0,0)=0. 
\]
By \eqref{eq:LaplaceisH}, the mean curvature of $S$ at $0$ equals $H=\Laplace g(0,0)$.

To complete the proof, we shall relate $g$ to $f$ and express $\Gscr(f)(0,0)$ in terms of $\Laplace g(0,0)$.
In the coordinates $(x,y,z)$ the surface $S$ is parameterized by 
\[
	x=\frac{1}{\sqrt{1+a^2}}\left(u-ag(u,v)\right),\quad y=v,\quad z=\frac{1}{\sqrt{1+a^2}}\left(au+g(u,v)\right).
\]
Since $S$ is also given by $z=f(x,y)$, we have the identity
\[
	 au+g(u,v) = \sqrt{1+a^2}\, \cdotp f\left(\frac{u-ag(u,v)}{\sqrt{1+a^2}},v\right).
\]
We now differentiate this identity twice on $u$ and $v$:
\begin{eqnarray*}
	a+g_u &=& f_x (1-ag_u) \\
	g_v     &=& f_x(-ag_v) + f_y \sqrt{1+a^2} \\
	g_{uu} &=& f_{xx}(1-ag_u)^2/\sqrt{1+a^2} + f_x (-ag_{uu}) \\
	g_{vv} &=& f_{xx}(-ag_v)^2/\sqrt{1+a^2} + f_x (-ag_{vv}) + f_{xy}(-2ag_v) + f_{yy}\sqrt{1+a^2}.	
\end{eqnarray*}
Evaluating these quantities at $(u,v)=(0,0)$ (which corresponds to $(x,y)=(0,0)$) and taking
into account that $f_x(0,0)=a,\ f_y(0,0)=0$, $g_u(0,0)=g_v(0,0)=0$ gives
\begin{eqnarray*}
	g_{uu}(0,0) &=& f_{xx}(0,0)/\sqrt{1+a^2} - a^2 g_{uu}(0,0), \\
	g_{vv}(0,0) &=&  -a^2 g_{vv}(0,0) + f_{yy}(0,0)\sqrt{1+a^2}, 
\end{eqnarray*}
and hence
\[
	f_{xx}(0,0) = {\sqrt{1+a^2}\,}^3 g_{uu}(0,0),\qquad f_{yy}(0,0)=\sqrt{1+a^2}\, g_{vv}(0,0).
\]
Therefore we get at $(0,0)$ that 
\begin{eqnarray*}
	\Gscr(f) &=& (1+f_y^2)f_{xx} - 2f_xf_y f_{xy} + (1+f_x^2)f_{yy} \\
		     &=& {\sqrt{1+a^2}\,}^3 g_{uu} +  (1+a^2) \sqrt{1+a^2} \, g_{vv} \\
		     &=& {\sqrt{1+a^2}\,}^3 \Laplace g.
\end{eqnarray*}
This shows that $\Gscr(f)(0,0)=0$ (the minimal surface equation holds at the origin) if and only if $\Laplace g(0,0)=H=0$
(the mean curvature of $S$ vanishes at $0\in S$). Since we considered an arbitrary point of $S=G_f$,
this proves the theorem.
\end{proof}

Theorem \ref{th:Meusnier} shows that a surface in $\R^3$ has vanishing mean curvature if and only if small pieces of
the surface are minimal graphs over affine planes. This motivates the following definition. 
We shall see that vanishing of the mean curvature is equivalent to the surface being
a stationary point of the area functional for general variations (see Theorem \ref{th:CMIMC}).

%
% DEFINITION OF A MINIMAL SURFACE
%
\begin{definition}\label{def:minimalsurface}
A smooth surface in $\R^3$ is a {\em minimal surface} if its mean curvature equals zero 
at every point: $\kappa_1+\kappa_2=0$.
\end{definition}

Every point in a minimal surface is a saddle point, and the surface 
is equally curved in both principal directions but in the opposite normal directions. 
Furthermore, the Gaussian curvature function
$K= \kappa_1 \kappa_2=-\kappa_1^2\le 0$ is nonpositive at every point.
The integral
\begin{equation}\label{eq:TC}
	\TC(S)=\int_S K\,\cdotp dA \in [-\infty,0]
\end{equation}
of the Gaussian curvature function with respect to the surface area on $S$ is called 
the {\em total Gaussian curvature}. This number equals zero if and only if $S$ is a 
piece of a plane.

The definition of a minimal surface extends to immersed surfaces 
since every immersion is locally near each point an embedding. More precisely, if
$M$ is a smooth surface and $F:M\to\R^3$ is a smooth immersion (i.e., 
its differential is injective), % at every point $p\in M$, 
then every $p\in M$ has a neighbourhood $U\subset M$ such that $F(U)\subset \R^3$ 
is an embedded surface. We say that $F$ is a minimal immersion if every such surface $F(U)$ 
has vanishing mean curvature.

The global structure of a minimal surface can be complicated, 
and it need not be the image of a planar domain, although many of the oldest known examples 
such as the catenoid, the helicoid, Enneper's surface, Riemann's minimal examples, and many others
are parameterized by plane domains. In fact, any open (noncompact) surface, orientable
or nonorientable, can be realized as an immersed minimal surface in $\R^3$. Even more is true --- 
one can prescribe the conformal structure on such a surface; this notion will be explained 
in the following section. We refer to \cite[Chapter 3]{AlarconForstnericLopez2021book}
for more on this topic.

%
%
%   CONNECTION TO COMPLEX ANALYSIS
%  
%
\section{Conformal parameterization of a minimal surface}\label{sec:conformal}

Let $(u,v)$ be coordinates on $\R^2$. An immersion $F:D\to\R^n$ from a domain $D\subset\R^2$
is said to be {\em conformal} if it preserves angles at every point. Explicitly, if $p\in D$ and
$\xi,\eta\in\R^2$ are nonzero vectors which determine an angle of size $0\le \theta\le \pi$, then 
the angle between the image vectors $dF_p (\xi), dF_p(\eta) \in \R^n$ also equals $\theta$.
It is elementary to see that an immersion is conformal if and only if it satisfies the following 
two conditions at every point $(u,v)\in D$:
\begin{equation}\label{eq:conformal}
	F_u\,\cdotp F_u = F_v\,\cdotp F_v, \qquad F_u\, \cdotp F_v =0.
\end{equation}
Here, the dot indicates the Eulidean scalar product.
See \cite[Lemma 1.8.4]{AlarconForstnericLopez2021book} for the details.

Recall that $\Delta$ denotes the Laplace operator. We shall need the following lemma.

%
%   LAPLACIAN = MEAN CURVATURE
%
\begin{lemma}\label{lem:Laplace-MC}
Let $D$ be a domain in $\R^2$. 
If $F=(F_1,F_2,\ldots,F_n): D\to \R^n$ is a conformal immersion of class $\Cscr^2$, then the vector 
$\Laplace F(p)=(\Laplace F_1(p),\ldots,\Laplace F_n(p))$ is orthogonal to the plane 
$dF_p(\R^2)\subset \R^n$ for every $p\in D$. Equivalently, 
\begin{equation}\label{eq:Laplaceorthogonal}
	\Laplace F \,\cdotp F_u=0,\qquad 	\Laplace F \,\cdotp F_v=0
\end{equation}
holds identically on $D$.
\end{lemma}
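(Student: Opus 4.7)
The plan is to obtain the two orthogonality relations in \eqref{eq:Laplaceorthogonal} purely by differentiating the conformality conditions \eqref{eq:conformal}, with no appeal to geometry beyond what is already set up. Since $F$ is of class $\Cscr^2$, both conditions in \eqref{eq:conformal} may be differentiated once with respect to $u$ and $v$, and the resulting identities will be manipulated algebraically to produce the desired vanishings.

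First I would differentiate $F_u\!\cdot\! F_u = F_v\!\cdot\! F_v$ in $u$ to get $F_{uu}\!\cdot\! F_u = F_{uv}\!\cdot\! F_v$, and differentiate $F_u\!\cdot\! F_v = 0$ in $v$ to get $F_{uv}\!\cdot\! F_v + F_u\!\cdot\! F_{vv} = 0$. Combining these two gives $F_u\!\cdot\! F_{vv} = -F_u\!\cdot\! F_{uu}$, which is exactly $F_u\!\cdot\!(F_{uu}+F_{vv}) = 0$, i.e., $\Delta F\!\cdot\! F_u = 0$. The argument for $F_v$ is the mirror image: differentiate $F_u\!\cdot\! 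F_u = F_v\!\cdot\! F_v$ in $v$ to get $F_{uv}\!\cdot\! F_u = F_{vv}\!\cdot\! F_v$, and differentiate $F_u\!\cdot\! F_v=0$ in $u$ to get $F_{uu}\!\cdot\! F_v + F_u\!\cdot\! F_{uv} = 0$. Together these yield $F_v\!\cdot\!(F_{uu}+F_{vv}) = 0$, i.e., $\Delta F\!\cdot\! F_v = 0$.

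Once \eqref{eq:Laplaceorthogonal} is established, the geometric statement follows immediately: since $F$ is an immersion, $F_u(p)$ and $F_v(p)$ are linearly independent and span the tangent plane $dF_p(\R^2)$, so a vector orthogonal to both is orthogonal to the whole plane. This handles the equivalence asserted in the lemma.

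I do not expect any real obstacle here; the only point that requires a bit of care is remembering to differentiate each of the two conformality identities with respect to the \emph{right} variable so that the same mixed term $F_{uv}\!\cdot\! F_v$ (respectively $F_{uv}\!\cdot\! F_u$) appears in both equations and can be eliminated. Everything else is bookkeeping within the product rule, and no appeal to the minimal surface equation, the Euler--Lagrange formalism of Section~\ref{sec:graphs}, or the curvature computations of Section~\ref{sec:curvature} is needed.
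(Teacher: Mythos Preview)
Your proof is correct and follows essentially the same approach as the paper: differentiate the first conformality identity with respect to one variable and the second with respect to the other, then eliminate the common mixed term to obtain each orthogonality relation. The paper's argument is identical up to phrasing, presenting the chain $F_{uu}\cdot F_u = F_{uv}\cdot F_v = -F_{vv}\cdot F_u$ in one line.
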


\begin{proof}
Differentiating the first identity in \eqref{eq:conformal} on $u$ and the second one on $v$ yields
\[
	 F_{uu} \,\cdotp  F_{u} =  F_{uv} \,\cdotp  F_{v} = - F_{vv} \,\cdotp  F_{u},
\]
whence $\Laplace F \, \cdotp  F_{u} = (F_{uu} + F_{vv})\, \cdotp  F_{u} = 0$. 
Likewise, differentiating the first identity on $v$ and
the second one on $u$ gives $\Laplace F \, \cdotp  F_{v} =0$. 
\end{proof}

Note that if $A$ is an $n\times n$ matrix and $\bx(u,v)=(x_1(u,v),\ldots,x_n(u,v)) \in\R^n$ is a 
smooth map $D\to\R^n$ from a plane domain $D\subset \R^2$, then  
\begin{equation}\label{eq:LaplaceR}
	\Laplace \left(A \,\cdotp \bx(u,v)\right) = A\,\cdotp \Laplace \bx(u,v).
\end{equation}
It follows that the property of the Laplacian $\Laplace F$ in Lemma \ref{lem:Laplace-MC} 
is invariant under rigid motions of $\R^n$ since they preserve angles.

We explained in the previous section how the notion of a minimal 
surface extends to immersions $F:D\to \R^3$ from plane domains.
The image $S=F(D)$ of such an immersion is called an {\em immersed minimal surface}. 
We will now show the following result. 

%
%   CONFORMAL IMMERSIONS ARE MINIMAL IFF HARMONIC
%
\begin{theorem}\label{th:CMI}
A conformal immersion $F=(x,y,z): D\to \R^3$ of class $\Cscr^2$ from a domain 
$D\subset\R^2$ parameterizes a surface with vanishing mean curvature function if and only if $F$ is harmonic:
\[ % begin{equation}\label{eq:Laplace}
	\Laplace F=(\Laplace x,\Laplace y,\Laplace z)=0.
\] % end{equation}
\end{theorem}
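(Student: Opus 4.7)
The plan is to reduce to a local statement at an arbitrary point $p_0 \in D$, fix coordinates adapted to the tangent plane of the surface at $F(p_0)$, and then relate the vertical component of $\Laplace F(p_0)$ to the mean curvature of the resulting graph by a chain-rule computation controlled by the conformality relations \eqref{eq:conformal}.

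First I would fix $p_0 \in D$ and apply a rigid motion of $\R^3$ sending $F(p_0)$ to $0$ and $dF_{p_0}(\R^2)$ to $\R^2 \times \{0\}$. Such a rigid motion changes none of the three relevant quantities: the conformality of $F$ is preserved because orthogonal maps preserve the inner product, the harmonicity of $F$ is preserved by the identity \eqref{eq:LaplaceR}, and the mean curvature of $S = F(D)$ is invariant under isometries by the first itemized principle of Section \ref{sec:curvature}. Near the origin, $S$ is then a graph $z = f(x,y)$ with $f(0,0) = 0$ and $\nabla f(0,0) = 0$, and the formula \eqref{eq:LaplaceisH} identifies the mean curvature of $S$ at $0$ with $H = \Laplace f(0,0)$.

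By Lemma \ref{lem:Laplace-MC}, $\Laplace F(p_0)$ is orthogonal to $\R^2 \times \{0\}$, so its first two components vanish and only the third component $\Laplace F_3(p_0)$ remains to be analysed. Writing $F_3 = f \circ (F_1, F_2)$ and differentiating twice in $u$ and $v$, the first-order terms in $f_x, f_y$ drop out at $p_0$ because $\nabla f(0,0) = 0$, leaving $\Laplace F_3(p_0)$ as a quadratic form in the entries of the $2 \times 2$ Jacobian of $(F_1, F_2)$ at $p_0$ paired against the Hessian of $f$. The conformality relations \eqref{eq:conformal} at $p_0$ assert that the columns of this Jacobian are orthogonal vectors of common length $\lambda > 0$; a short linear-algebra step (inspecting $M M^T$ for the $2 \times 2$ Jacobian $M$) then shows that the rows are also orthogonal of length $\lambda$. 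The mixed term disappears and the diagonal terms collapse to the clean identity $\Laplace F_3(p_0) = \lambda^2 \Laplace f(0,0) = \lambda^2 H$.

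Since $F$ is an immersion, $\lambda > 0$, so $\Laplace F(p_0) = 0$ if and only if $H = 0$; as $p_0 \in D$ is arbitrary, the theorem follows. I do not anticipate any real obstacle. The only subtleties are bookkeeping: verifying that the reduction to tangent-plane coordinates preserves all three properties in play (conformality, harmonicity, vanishing mean curvature), and noting the small linear-algebra observation that orthonormality (up to a common scalar) of the columns of a $2 \times 2$ matrix automatically forces the same for its rows.
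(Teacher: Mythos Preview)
Your proposal is correct and follows essentially the same route as the paper's proof: localize at a point, rotate so the tangent plane is horizontal, invoke Lemma~\ref{lem:Laplace-MC} to kill the first two components of $\Laplace F$, and compute the third via the chain rule using $\nabla f(0,0)=0$. The only cosmetic difference is that the paper normalizes one step further so that $dF_{p_0}=\mu\,\mathrm{Id}$ on the first two coordinates (making the Jacobian diagonal and the computation immediate), whereas you leave the Jacobian as a general conformal $2\times 2$ matrix and invoke the observation $M^TM=\lambda^2 I\Rightarrow MM^T=\lambda^2 I$ to reach the same identity $\Laplace F_3(p_0)=\lambda^2 H$.
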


Furthemore, by Theorem \ref{th:CMIn} a conformal immersion $F:D\to\R^n$ is harmonic if and only
if it is a stationary point of the area functional, so the picture will be complete; see Theorem \ref{th:CMIMC}
which holds for any $n\ge 3$.

\begin{proof}
Fix a point $p_0\in D$; by a translation of coordinates we may assume that  $p_0=(0,0)\in \R^2$.
By \eqref{eq:LaplaceR} we are allowed to make the calculation in any Euclidean coordinate system on $\R^3$
obtained from the initial one by rigid motions. 
Since the differential $dF_{(0,0)}:\R^2\to\R^3$ is a conformal linear map and the mean curvature of a surface
is not affected by rigid motions of $\R^3$ either, we may assume that
\[
	F(0,0)=(0,0,0)\ \ \text{and}\ \ dF_{(0,0)}(\xi_1,\xi_2)=(\mu\xi_1,\mu\xi_2,0)\ \ 
	\text{for all $\xi=(\xi_1,\xi_2)\in\R^2$} 
\]	
for some $\mu>0$. Equivalently, at $(u,v)=(0,0)$ the following hold: 
\begin{equation}\label{eq:at0}
	 x_u=y_v=\mu>0,\quad x_v=y_u=0,\quad  z_u=z_v=0.
\end{equation}
Note that 
\begin{equation}\label{eq:mu}
	\mu = |F_u| = |F_v|.
\end{equation}
The implicit function theorem shows that there is a neighbourhood $U\subset D$ of the origin
such that the surface $S=F(U)$ is a graph $z=f(x,y)$ with $df_{(0,0)}=0$, so 
$f$ is of the form \eqref{eq:graphS}. 
By \eqref{eq:LaplaceisH} the mean curvature of $S$ at $0\in\R^3$ equals $H=\Laplace f(0,0)$.

Since the immersion $F$ is conformal, \eqref{eq:Laplaceorthogonal} shows that 
$\Laplace F$ is orthogonal to the $(x,y)$-plane $\R^2\times \{0\}$ at the origin, which means that 
\begin{equation}\label{eq:Laplaceat0}
	\Laplace x = \Laplace y = 0\ \ \text{at $(0,0)$}. 
\end{equation}
We now calculate $\Laplace z(0,0)$. Differentiation of $z(u,v)= f(x(u,v),y(u,v))$ gives
\[
	z_u=f_x x_u+f_y y_u,\qquad z_v=f_x x_v+f_y y_v,
\]
\[
	z_{uu} = \left(f_x x_u+f_y y_u\right)_u 
	= f_{xx} x_u^2 + f_{xy}x_uy_u + f_x x_{uu} + f_{yx}x_u y_u + f_{yy}y_u^2 + f_y y_{uu}. 
\]
At the point $(0,0)$, taking into account \eqref{eq:at0} and $f_x=f_y=0$
we get $z_{uu}=\mu^2  f_{xx}$. A similar calculation gives $z_{vv}=\mu^2  f_{yy}$ at $(0,0)$, 
so we conclude that
\begin{equation}\label{eq:Laplacezf}
	\Laplace z(0,0) = \mu^2 \Laplace f(0,0) = \mu^2 H,
\end{equation}
where $H$ is the mean curvature of the surface $F(U)$ at the origin. 
Denoting by $\bN=(0,0,1)$ the unit normal vector to $F(U)$ at $0\in\R^3$,
it follows from \eqref{eq:mu}, \eqref{eq:Laplaceat0} and \eqref{eq:Laplacezf} that 
\begin{equation}\label{eq:LaplaceH}
	\Laplace F = |F_u|^2 H \bN =  \frac12 |\nabla F|^2 H \bN
\end{equation}
holds at $(0,0)\in D$. In particular, $\Laplace F(0,0)=0$ if and only if 
$H=0$, i.e., the mean curvature of the image surface vanishes at $0$. 
Since this argument holds for an arbitrary point of $D$, this completes the proof.
\end{proof}

%
%   REMARK: FORMULA RELATING LAPALACIAN AND MEAN CURVATURE
%
\begin{remark}[Mean curvature vector field]\label{rem:LaplaceH}
If $\bN$ is the unit normal vector field along the surface $F(D)\subset \R^3$ given by 
\[
	\bN = \frac{F_u \times F_v}{|F_u \times F_v|},
\]
the formula \eqref{eq:LaplaceH} holds at every point of $D$ as seen 
by rigid coordinate changes on $\R^3$. Note that the sign of the principal curvatures 
and of the mean curvature $H$ of a surface at a point depend on the choice 
of the normal vector $\bN$ at that point. Replacing $\bN$ by $-\bN$ also replaces $H$ by $-H$,
and hence the product $H \bN$ is independent of the choice of $\bN$. 
The vector field $H \bN$ is called the {\em mean curvature vector field} of $F$.
See \eqref{eq:MCV}, \eqref{eq:LaplaceHn} and Theorem \ref{th:CMIMC} for a generalization to minimal
surfaces in $\R^n$ for any $n\ge 3$.
\qed\end{remark}

A natural question appears: can we always replace a parameterization of an immersed
surface $F:D\to\R^n$ by a conformal one? The answer is affirmative locally near any given point of $D$, 
but globally the situation is more involved. Let us begin with the local picture.

An immersion $F:D\to\R^n$ determines on $D$ the Riemannian metric $g$, also called the 
{\em first fundamental form} of the immersed surface, by the formula
\[
	g= |F_u|^2 du^2 + (F_u\, \cdotp F_v) (du dv + dv du) + |F_v|^2 dv^2.
\]
This means that for any $p\in D$ and vectors $\xi=(\xi_1,\xi_2),\ \eta=(\eta_1,\eta_2)\in\R^2$ we have 
\begin{eqnarray*}
	g_p(\xi,\eta) &=& |F_u(p)|^2 \xi_1\eta_1 + F_u(p)\, \cdotp F_v(p) (\xi_1\eta_2+\xi_2\eta_1)
	+ |F_v(p)|^2 \xi_2\eta_2 \\
	&=& dF_p(\xi)\,\cdotp dF_p(\eta),
\end{eqnarray*}
so $F$ is an isometry from $D$ with the metric $g$ to $F(D)\subset \R^n$ with the Euclidean metric. 

The main point now is that any point $p\in D$ has a neighbourhood $U\subset D$ and 
coordinates $(\tilde u,\tilde v)$ on $U$ in which a given Riemannian metric $g$ assumes 
the simpler form
\[
	g= \lambda (d\tilde u^2 + d\tilde v^2)
\]
for some positive function $\lambda>0$. Any such coordinates $(\tilde u,\tilde v)$ are
called {\em isothermal coordinates} for the Riemannian metric $g$. 
Letting $\wt F=\wt F(\tilde u,\tilde v)$ be the immersion $U\to\R^n$
obtained from $F$ by expressing $(u,v)$ in terms of $(\tilde u,\tilde v)$, we get
\[
	|\wt F_{\tilde u}|^2 =  |\wt F_{\tilde v}|^2 = \lambda,\qquad 
		\wt F_{\tilde u}\,\cdotp \wt F_{\tilde v}=0,
\]
so $\wt F:U\to\R^n$ is a conformal immersion.

The existence of local isothermal coordinates was discovered by C.\ F.\ Gauss
for surfaces of revolution. The proof of the general case is beyond the reach of this article, 
and we refer to \cite[Section 1.8]{AlarconForstnericLopez2021book} where the optimal result 
(assuming only H\"older $\Cscr^\alpha$ regularity of the Riemannian metric for some $0<\alpha <1$)
is given, along with a brief history of the subject and references to the original articles and books. 

A reader who is only interested in the local picture 
may wish to skip the remainder of this section. For those brave enough, we now describe the global situation. 
Assume that $M$ is an abstract smooth surface
and $F:M\to\R^n$ is a smooth immersion. (A reader who is
not familiar with the basic  theory of manifolds may simply imagine that $M\subset \R^n$ 
is an embedded smooth surface and $F$ is the inclusion map.) By what was said above, 
we can parameterize a connected open neighbourhood $U\subset M$ of any given point $p\in M$ by
a smooth diffeomorphism $\phi :U'\to U$ from an open set $U'\subset \R^2$
such that $F\circ \phi:U'\to \R^n$ is a conformal embedding. 
If $\tilde \phi$ is another such local parametrization of a piece of $M$,
then the {\em transition map} $\phi^{-1}\circ \tilde \phi$ is a conformal diffeomorphism between
planar domains. (This map has nonempty domain only if the images of $\phi$ and $\phi'$
overlap, and its domain is the preimage of this overlap by $\tilde \phi$.) 
Identifying $\R^2$ with the complex plane $\C$, it is classical
that a conformal diffeomorphism between a pair of connected domains in $\C$ is either 
holomorphic or antiholomorphic, depending on whether it preserves or reverses the orientation.

A collection of local parameterizations of this kind whose images cover $M$
is said to be a {\em conformal atlas} on $M$, and a {\em complex atlas} 
if all transition maps are orientation preserving and therefore holomorphic. 
The latter case may be arranged if and only if the surface $M$ is orientable. 
The inverse $\phi^{-1}:U\to U'\subset\R^2$ of a local parameterization $\phi:U'\to U\subset M$ 
is called a {\em local chart} on $M$. A surface endowed with a conformal atlas is called 
a {\em conformal surface}, and one with a complex atlas is called a {\em Riemann surface}. 
For a conformal surface $M$, the notion of a conformal immersion $M\to \R^n$ 
is well defined by considering it in local coordinates from the conformal atlas.
There is an intrinsic notion of harmonic functions on a conformal surface, 
and the analogue of Theorem \ref{th:CMI} holds: 
{\em A conformal immersion $F:M\to\R^3$ from a conformal surface is a minimal immersion
if and only if $F$ is harmonic.}

The upshot is that the natural source surfaces to consider (when parameterizing minimal surfaces)  
are Riemann surfaces in the orientable case, and conformal surfaces in the nonorientable case. 
We refer to \cite[Sections 1.8, 1.9]{AlarconForstnericLopez2021book} for a more complete discussion.

How can one find a conformal parametrization in practice? 
In particular, if $F:\overline D\to\R^n$ is an immersion, is  there a diffeomorphism
$\phi:\overline D\to \overline D$ such that $F\circ \phi:\overline D\to \R^n$ is a {\em conformal} immersion?
The answer is affirmative if $\overline D$ is diffeomorphic to a closed disc.
In general, if $D$ is a bounded domain with piecewise smooth
boundary, we can find another domain $D'\subset\R^2$ of the same kind and a 
diffeomorphism $\phi:\overline{D'}\to \overline D$ such that $F\circ \phi: \overline {D'}\to \R^n$ 
is a conformal immersion. For example, if $D$ is an annulus
\[	
	A_r = \left\{(x,y)\in\R^2: 1/r^2 < x^2+y^2 < r^2\right\},\quad r>1,
\]
then we can take $D'=A_{r'}$ for a possibly different $r'>1$.
It is an elementary exercise in complex analysis that a pair of annuli 
$A_r$ and $A_{r'}$ as above are conformally equivalent (equivalently,  
biholomorphic) if and only if $r=r'$.

%%%%%%%%%%%%%%%%%%%%%%%%%%%%%%
%
%   MINIMAL SURFACES IN HIGHER DIMENSIONAL SPACES
%  
%%%%%%%%%%%%%%%%%%%%%%%%%%%%%%
\section{Minimal surfaces in higher dimensional Euclidean spaces}\label{sec:higherdim}

What we have said so far extends to surfaces in Euclidean spaces $\R^n$
of dimension $n>3$. In this case there is no particular advantage in considering graphs
as we did in Section \ref{sec:graphs}, so we shall consider immersed surfaces
parameterized by plane domains. We start from the beginning and calculate the 
first variation of area at a conformal map \eqref{eq:gradient}, from which it follows
that $F$ is a stationary point of the area functional if and only if it is a harmonic map 
(see Theorem \ref{th:CMIn}). We then follow the approach developed in Section \ref{sec:conformal} 
for the case $n=3$ and show that a conformal immersion is harmonic if and only if its mean curvature
vector field vanishes identically; see Theorem \ref{th:CMIMC}. 

Denote the coordinates on $\R^n$ by $x=(x_1,x_2,\ldots,x_n)$.
Given a bounded domain $D$ in $\R^2_{(u,v)}$ with piecewise
$\Cscr^1$ boundary and a $\Cscr^2$ immersion $F=(F_1,\ldots,F_{n}):\overline D\to\R^n$, 
the area of the image surface $F(\overline D)\subset \R^n$ (counting multiplicities) is given by
\begin{equation}\label{eq:Areafn}
	\Area(F) = \int_D \sqrt{|F_u|^2 |F_v|^2 - |F_u\,\cdotp  F_v|^2} \, dudv.
\end{equation}
As before, $F_u$ and $F_v$ denote the partial derivatives of $F$ with respect to the indicated variables,
which are $\Cscr^1$ functions with values in $\R^n$. The gradient $\nabla F =(F_u,F_v)$ is represented by 
an $n\times 2$ matrix and $|\nabla F|^2=|F_u|^2+ |F_v|^2$.

If $F$ is a conformal immersion (see \eqref{eq:conformal}), then the formula \eqref{eq:Areafn} simplifies to 
\begin{equation}\label{eq:Areaconformal}
	\Area(F) = \int_D |F_u|^2 \, dudv = \frac{1}{2} \int_D \left(|F_u|^2+ |F_v|^2\right) \, dudv
	= \frac{1}{2} \Dscr(F) ,
\end{equation}
where 
\begin{equation}\label{eq:energy}
	\Dscr(F) = \int_D |\nabla F|^2 \, dudv	
\end{equation}
is the {\em Dirichlet (energy) integral} of the map $F$. 
(See Remark \ref{rem:conformal} for more on this.)

We now calculate the first variation of $\Area(F)$, assuming that $F$ is conformal.
Let $h:\overline D\to\R^n$ be a $\Cscr^1$ map vanishing on $bD$.  Consider the expression
under the integral \eqref{eq:Areafn} for the map $F_s=F+sh$, $s\in \R$. (Note that $F_s$ is an immersion
for $s$ close to $0$.) Taking into account that $|F_u|=|F_v|$ and $F_u\,\cdotp F_v=0$
(see \eqref{eq:conformal}) we obtain
\[
	|F_u+sh_u|^2 \,\cdotp |F_v+sh_v|^2 = |F_u|^4 + 
	2s \left(F_u\,\cdotp h_u + F_v\,\cdotp h_v\right) |F_u|^2 + O(s^2),
\]
\[
	|(F_u+s h_u)\,\cdotp  (F_v+sh_v)|^2 = O(s^2).
\]
It follows that 
\begin{multline*}
	\frac{d}{ds}\Big|_{s=0} \bigl(|F_u+sh_u|^2 |F_v+sh_v|^2 - |(F_u+sh_u)\,\cdotp (F_v+sh_v)|^2 \bigr)  \cr
	= 2 \left(F_u\,\cdotp h_u + F_v\,\cdotp h_v\right) |F_u|^2 
\end{multline*}
and therefore
\begin{eqnarray}
	\frac{d}{ds}\Big|_{s=0} \Area(F+sh) 
	&=& \int_D \left( F_u\,\cdotp h_u + F_v\,\cdotp h_v\right)  dudv \cr\cr
	&=& -\int_D \Laplace F \cdotp h \, dudv.
	\label{eq:gradient}
\end{eqnarray}
(We integrated by parts and used that $h|_{bD}=0$.
Note that the factor $|F_u|^2$ also appears in the denominator when differentiating the 
expression for $\Area(F+sh)$ under the integral at $s=0$, so these terms cancel.)
Clearly, this expression vanishes for all maps $h$ vanishing on $bD$ if and only if $\Laplace F=0$,
which gives the following result. 
% This extends Theorem \ref{th:CMI} to arbitrary dimension $n\ge 3$.

\begin{theorem}\label{th:CMIn}
A conformal immersion $F:D\to \R^n$ $(n\ge 3)$ of class $\Cscr^2$ is a stationary
point of the area functional \eqref{eq:Areafn} if and only if $F$ is harmonic.
\end{theorem}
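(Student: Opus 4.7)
The plan is to derive the theorem as an immediate corollary of the variational identity \eqref{eq:gradient} just established in the excerpt, combined with a componentwise application of the fundamental lemma of the calculus of variations.

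First, I would invoke the calculation already carried out above: for a conformal $\Cscr^2$ immersion $F:D\to\R^n$ and any $\Cscr^1$ map $h:\overline D\to\R^n$ vanishing on $bD$,
\[
\frac{d}{ds}\Big|_{s=0} \Area(F+sh) \;=\; -\int_D \Laplace F \cdot h \, dudv.
\]
The ``if'' direction is then immediate: if $\Laplace F \equiv 0$ on $D$, the right-hand side vanishes identically in $h$, so $F$ is a stationary point.

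For the converse, suppose $F$ is stationary, so $\int_D \Laplace F\cdot h\, dudv = 0$ for every admissible $h$. I would argue componentwise. If some coordinate $\Laplace F_k$ were not identically zero on $D$, continuity would furnish an open disc $U\Subset D$ on which $\Laplace F_k$ has constant sign, say $\Laplace F_k>0$. Choosing a smooth bump function $\phi\ge 0$ supported in $U$ with $\phi\not\equiv 0$, and taking $h=\phi\,\be_k$ with $\be_k$ the $k$-th standard basis vector of $\R^n$, one obtains an admissible variation with $\int_D \Laplace F\cdot h\, dudv = \int_U (\Laplace F_k)\,\phi\, dudv > 0$, contradicting stationarity. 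Hence $\Laplace F \equiv 0$ on $D$.

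There is no real obstacle here, since the computational heart of the theorem --- producing the $-\int_D \Laplace F\cdot h$ form of the first variation --- is carried out before the statement. The only subtlety worth recording is that, just as in Section~\ref{sec:graphs} for graphs, the restriction to linear variations $F+sh$ is without loss of generality: for a general $\Cscr^2$ family $F_s$ with $F_0=F$ and $F_s|_{bD}=F|_{bD}$, the chain rule gives $\frac{d}{ds}\big|_{s=0}\Area(F_s)=\frac{d}{ds}\big|_{s=0}\Area(F+sh)$ with $h=\di_s F_s|_{s=0}$, so being stationary among all admissible variations is the same condition as being stationary among linear ones.
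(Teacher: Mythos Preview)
Your proposal is correct and follows essentially the same approach as the paper: the first variation formula \eqref{eq:gradient} is derived in the text immediately preceding the theorem, and the paper then simply asserts that ``this expression vanishes for all maps $h$ vanishing on $bD$ if and only if $\Laplace F=0$.'' Your componentwise bump-function argument merely spells out the fundamental lemma step that the paper leaves implicit, and your remark on linear versus general variations mirrors the analogous observation made in Section~\ref{sec:graphs}.
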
 

A conformal immersion satisfying the equivalent conditions in the above theorem is said to be
a {\em conformal minimal immersion}; it parameterizes an immersed minimal surface in $\R^n$.
Theorem \ref{th:CMIMC} proved below shows that these two conditions are further equivalent
to vanishing of the mean curvature vector field, so the picture will be complete.

%
%   GRADIENT OF THE AREA FUNCTIONAL
%
\begin{remark}[Gradient of the area functional] \label{rem:gradient}
The identity \eqref{eq:gradient} says more. The quantity
\begin{equation}\label{eq:gLaplacian}
	\frac{2}{|\nabla F|^2}\Laplace F=\Laplace_g F
\end{equation} 
is the {\em intrinsic Laplacian} of $F=(F_1,\ldots,F_n)$ with respect to
the $F$-induced Riemannian metric $F^*ds^2=|dF_1|^2+\cdots +|dF_n|^2$ on $D$, 
and $dA=\frac12 |\nabla F|^2 dudv$ is the associated area measure on the image surface
as seen from \eqref{eq:Areaconformal}. Hence, \eqref{eq:gradient} can be written in the form
\[
	\frac{d}{ds}\Big|_{s=0} \Area(F+sh) = -\int_D \Laplace_g F \,\cdotp h \, dA.
\]
This is the {\em first variational formula for the area} of an immersed surface.
It may be interpreted by saying that the vector field $\Laplace_g F$ 
along a conformal immersion $F:D\to\R^n$ is the negative gradient of the area functional
at $F$. In other words, deforming the surface in the direction of $\Laplace_g F$,
keeping it fixed on the boundary of a domain in $D$, leads to the fastest decrease of the area
of that piece of the surface. As we shall presently see, $\Laplace_g F$ equals 
the {\em mean curvature vector field} of $F$; cf.\  \eqref{eq:LaplacegH}.
\qed\end{remark}

The subsequent geometric analysis is much like what we have done in
the case $n=3$ in Section \ref{sec:conformal}. Assume that $F:D\to \R^n$ is a conformal immersion
of class $\Cscr^2$. Fix a point $p_0=(u_0,v_0)\in D$ and assume by translations that  
$p_0\in\R^2$ and $F(0,0)=0\in\R^n$. Denote by $\be_1,\ldots,\be_n$ the standard basis 
vectors of $\R^n$. Postcomposing $F$ by an element of the orthogonal group $O_n(\R)$
we may assume that
\[
	F(0,0)=0\in\R^n,\quad F_u(0,0)=\mu \be_1,\quad F_v(0,0)=\mu \be_2
\]
where $\mu = |F_u(0,0)|=|F_v(0,0)|$. By the implicit function theorem there is a neighbourhood 
$U\subset D$ of $(0,0)$ such that the embedded surface $S=F(U)\subset \R^n$ is a graph 
over the domain $V=(F_1,F_2)(U)\subset \R^2$ given by
\[
	S=\big\{(x_1,x_2,\ldots,x_n):(x_1,x_2)\in V,\ x_i=f_i(x_1,x_2)\ \text{for}\ i=3,\ldots,n\big\},
\]
where each $f_i:V\to\R$ is a $\Cscr^2$ function of the form 
\[ 
	f_i(x_1,x_2) = \frac{1}{2}\left( \alpha_i x_1^2 + 2\beta_i x_1x_2 + \gamma_i x_2^2\right) 
	+ o(x_1^2+x_2^2).
\] 
The matrix 
$
	A_i=\left(\begin{matrix} \alpha_i & \beta_i\cr \beta_i &  \gamma_i \end{matrix}\right)
$
is the Hessian matrix of $f_i$ at $(0,0)$; see \eqref{eq:Hessian}. 
The discussion in Section \ref{sec:curvature} 
shows that the eigenvalues $\kappa^i_1,\ \kappa^i_2$ of $A_i$ 
are the principal curvatures at $0$ of the surface $S_i$ in the $3$-dimensional
space $\R^3_{x_1,x_2,x_i}$ obtained by orthogonally projecting $S$ to it (i.e., 
neglecting the components $x_j$ for $j\in \{3,\ldots,n\}\setminus \{i\}$).
The numbers
\[
	H_i= \kappa^i_1 + \kappa^i_2=\Laplace f_i(0,0),\qquad K_i=\kappa^i_1 \kappa^i_2
\]
are, respectively, the mean curvature and the Gaussian curvature of $S$ at $0$ in 
the direction of the normal vector $\be_i$ for $i\in \{3,\ldots,n\}$. The vector
\begin{equation}\label{eq:MCV}
	\bH= \sum_{i=3}^n H_i \be_i = \sum_{i=3}^n  \Laplace f_i(0,0) \be_i 
\end{equation}
is the {\em mean curvature vector} of the surface $S$ at $0\in S$. 
Since $F$ is conformal, the vector $\Laplace F$ is orthogonal to $F$ at every point
by Lemma \ref{lem:Laplace-MC}, which implies that 
\[
	\Laplace F_1(0,0)=\Laplace F_2(0,0)=0.
\]
The same calculation as in the case $n=3$ (see in particular \eqref{eq:Laplacezf}) 
then shows that 
\[
	\Laplace F_i(0,0) = \mu^2 \Laplace f_i(0,0) = \mu^2 H_i,\quad\ i=3,\ldots,n.
\]
Since $\mu^2=|F_u(0,0)|^2=|F_v(0,0)|^2=\frac12 |\nabla F(0,0)|^2$, the upshot is that 
\[
	\Laplace F(0,0)= \frac12 |\nabla F(0,0)|^2 \, \bH.
\]
Applying rigid motions of $\R^n$ as we did in Section \ref{sec:conformal} for surfaces in $\R^3$, 
we see that the same holds true at every point of $D$. Together with Theorem \ref{th:CMIn}
this proves the following key result in the theory of minimal surfaces in Euclidean spaces.

%
%   CONFORMAL IMMERSION IS MINIMAL IFF IT IS HARMONIC
%
\begin{theorem}\label{th:CMIMC} 
A conformal immersion $F:D\to\R^n$ of class $\Cscr^2(D)$ satisfies 
\begin{equation}\label{eq:LaplaceHn}
	\Laplace F = \frac12 |\nabla F|^2\, \bH,
\end{equation}
where $\bH$ is the mean curvature vector field of the image surface. In particular,
$F$ is a stationary point of the area functional \eqref{eq:Areafn} 
if and only if the mean curvature vector field $\bH$ 
of the immersed surface $F(D)\subset \R^n$ vanishes identically.
\end{theorem}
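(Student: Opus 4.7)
The plan is to reduce the general case to computations already performed in Section \ref{sec:conformal} for $n=3$, by working pointwise in well-chosen Euclidean coordinates on $\R^n$. Fix $p_0 \in D$; after translating, assume $p_0=(0,0)$ and $F(0,0)=0\in\R^n$. The property \eqref{eq:LaplaceR} lets us precompose with isometries of $\R^n$ without changing either side of \eqref{eq:LaplaceHn}, so, using conformality of $dF_{(0,0)}$, I would arrange
\[
	F_u(0,0)=\mu\be_1,\qquad F_v(0,0)=\mu\be_2,\qquad \mu=|F_u(0,0)|=|F_v(0,0)|>0.
\]
By the implicit function theorem a neighbourhood of $0$ in $F(D)$ is a graph over the $(x_1,x_2)$-plane with coordinate functions $x_i=f_i(x_1,x_2)$ for $i=3,\ldots,n$, each satisfying $f_i(0,0)=0$ and $df_i(0,0)=0$.

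Next I would identify the mean curvature vector $\bH$ at $0$. Since $T_0 F(D)=\Span\{\be_1,\be_2\}$, the normal space is spanned by $\be_3,\ldots,\be_n$, and the component of $\bH$ in direction $\be_i$ is precisely the scalar mean curvature at $0$ of the surface obtained by orthogonally projecting $F(D)$ into the $3$-plane $\R^3_{x_1,x_2,x_i}$ — this uses the fact that the second fundamental form decomposes by normal directions, or directly that $\bH=\sum_i(\bH\cdot\be_i)\be_i$. The $n=3$ calculation of Section \ref{sec:curvature}, namely \eqref{eq:LaplaceisH}, then yields
\[
	\bH=\sum_{i=3}^n \Laplace f_i(0,0)\,\be_i.
\]

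Now I relate $\Laplace F(0,0)$ to the $f_i$'s. Conformality of $F$ combined with Lemma \ref{lem:Laplace-MC} forces $\Laplace F(0,0)$ to be normal to $T_0 F(D)$, so $\Laplace F_1(0,0)=\Laplace F_2(0,0)=0$. For $i\ge 3$, the identity $F_i(u,v)=f_i(F_1(u,v),F_2(u,v))$ together with $df_i(0,0)=0$ and the chain rule gives, as in the derivation of \eqref{eq:Laplacezf},
\[
	\Laplace F_i(0,0)=f_{i,x_1x_1}(0,0)|F_u(0,0)|^2 + f_{i,x_2x_2}(0,0)|F_v(0,0)|^2 = \mu^2\,\Laplace f_i(0,0),
\]
where the cross terms vanish because $x_1$ and $x_2$ play orthogonal conformal roles at $0$. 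Summing,
\[
	\Laplace F(0,0)=\mu^2 \bH = \tfrac12 |\nabla F(0,0)|^2\,\bH,
\]
since $|\nabla F(0,0)|^2=|F_u(0,0)|^2+|F_v(0,0)|^2=2\mu^2$. As $p_0$ was arbitrary and both sides transform equivariantly under rigid motions of $\R^n$ (by \eqref{eq:LaplaceR} and the isometric invariance of curvature), the identity \eqref{eq:LaplaceHn} holds throughout $D$. The ``in particular'' clause then follows from Theorem \ref{th:CMIn}: since $F$ is an immersion, $|\nabla F|^2>0$, so $\Laplace F\equiv 0$ if and only if $\bH\equiv 0$.

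The main obstacle I anticipate is the bookkeeping in the second step — justifying that the mean curvature \emph{vector} in $\R^n$ is the sum over $i\in\{3,\ldots,n\}$ of its $\be_i$-components computed from the projected graphs $x_i=f_i(x_1,x_2)$. One has to be careful that the normal space at $0$ is exactly $\Span\{\be_3,\ldots,\be_n\}$ (which is why the choice of frame via conformality matters) and that the scalar mean curvature in each direction $\be_i$ is unaffected by discarding the other $x_j$-components, since these vanish to first order at $0$. Once this linearization is in place, the computation reduces cleanly to $n$ copies of the $n=3$ argument.
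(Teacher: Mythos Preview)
Your proposal is correct and follows essentially the same route as the paper's proof: fix a point, normalize via a rigid motion so that $F_u=\mu\be_1$, $F_v=\mu\be_2$, write the surface locally as a graph $x_i=f_i(x_1,x_2)$ for $i\ge 3$, use Lemma~\ref{lem:Laplace-MC} to kill the tangential components of $\Laplace F$, and then invoke the $n=3$ chain-rule computation \eqref{eq:Laplacezf} componentwise to obtain $\Laplace F_i(0,0)=\mu^2\Laplace f_i(0,0)$. The only difference is that the paper sidesteps your anticipated ``obstacle'' by \emph{defining} the mean curvature vector via \eqref{eq:MCV}, i.e.\ as the sum of the projected scalar mean curvatures $H_i=\Laplace f_i(0,0)$ in the normal directions $\be_i$, rather than deriving this from a prior notion of second fundamental form.
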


In terms of the intrinsic metric Laplacian \eqref{eq:gLaplacian} of the immersion $F$, the formula
in the above theorem assumes the simpler form
\begin{equation}\label{eq:LaplacegH}
	\Laplace_g F = \bH. 
\end{equation}

%
%   REMARK: THE ROLE OF CONFORMAL PARAMETERIZATION
%
\begin{remark}\label{rem:conformal}
As already mentioned in the introduction, 
conformal parameterizations of minimal surfaces are the most natural ones from the 
physical perspective. Given a $\Cscr^1$ immersion $F:\overline D\to\R^n$,
let $\Dscr(F)$ be its Dirichlet energy integral \eqref{eq:energy}. 
Note that for any two vectors $x,y\in\R^n$ we have 
\[
	|x|^2|y|^2 - |x\,\cdotp y|^2 \le |x|^2|y|^2  \le \frac14 \left(|x|^2+|y|^2\right)^2
\]
with equality if and only if $x\,\cdotp y=0$ (for the first inequality) and $|x|=|y|$ (for the second one).
This simple observation shows that 
\[
	\Area(F)\le \frac{1}{2} \Dscr(F)\ \ \text{with equality if and only if $F$ is conformal}.
\] 
Assume now that $\overline D$ is the closed unit disc $\cd=\{(u,v)\in\R^2:u^2+v^2\le 1\}$. 
Consider maps $F:\cd\to \R^n$ of class $\Cscr^2(\cd)$ 
whose restriction to the boundary circle $\T=b\D$ is an injective
parameterization $F:\T\to \Gamma\subset \R^n$ of a given smooth oriented Jordan curve.
A map in this class minimizing the Dirichlet integral $\Dscr(F)$ also minimizes the area
and provides a conformally parameterized minimal surface with boundary $\Gamma$
(see Lawson \cite[Sect.\ II.1]{Lawson1980}). In other words, {\em conformal parameterization gives a
least energy spreading of the surface over a geometric configuration of least area in $\R^n$.}
This is in analogy to minimization of the energy integral of curves in a Riemannian manifold
which yields geodesics (curves of minimal length) parameterized by constant multiples of arc length.
\qed\end{remark}

%%%%%%%%%%%%%%%%%%%%%
%
%   A COMPLEX ANALYTIC VIEWPOINT
%  
%%%%%%%%%%%%%%%%%%%%%
\section{A complex analytic viewpoint on minimal surfaces}\label{eq:complex}

In this section we explain the Enneper-Weierstrass formula which provides a 
connection between holomorphic maps $D\to\C^n$ with special properties
from domains $D\subset \C$ and conformal minimal immersions $D\to\R^n$ for $n\ge 3$.
The same connection holds more generally for maps from any open Riemann surface
instead of a plane domain.

Let $z=x+\imath y$ be a complex coordinate on $\C$. Let us recall the following basic 
operators of complex analysis:
\[
	\frac{\di}{\di z}= \frac12 \left(\frac{\di}{\di x} - \imath \frac{\di}{\di y}\right), \qquad
	\frac{\di}{\di \bar z}= \frac12 \left(\frac{\di}{\di x} + \imath \frac{\di}{\di y}\right).
\]
The kernel of $\frac{\di}{\di \bar z}$ consists of holomorphic function, and the 
kernel of $\frac{\di}{\di z}$ consists of antiholomorphic functions. The differential 
of a function $F$ can be written in the form
\[
	dF = \frac{\di F}{\di x}dx +  \frac{\di F}{\di y}dy 
	= \frac{\di F}{\di z}dz + \frac{\di F}{\di \bar z}d\bar z
\]
where 
\[
	dz=dx+\imath dy,\qquad d\bar z=dx-\imath dy.
\]
Note that $\frac{\di F}{\di z}dz$ is the $\C$-linear part and $\frac{\di F}{\di \bar z}d\bar z$ is the 
$\C$-antilinear part of the differential $dF$. In terms of these operators, the Laplacian equals
\[
	\Laplace =  \frac{\di^2}{\di x^2}+\frac{\di^2}{\di y^2} 
	= 4 \frac{\di}{\di \bar z} \frac{\di}{\di z} =  4 \frac{\di}{\di z}\frac{\di}{\di \bar z}.
\]
Hence, a function $F:D\to\R$ is harmonic if and only if the function $\di F/\di z$ is holomorphic.
It follows that a smooth map $F=(F_1,F_2,\ldots,F_n):D\to \R^n$ is a harmonic immersion if and only if
the map $f=(f_1,f_2,\dots, f_n):D\to \C^n$ with components $f_j=\di F_j/\di z$ for $j=1,2,\ldots,n$
is holomorphic and the component functions $f_j$ have no common zero. (At a common zero
of these functions, $F$ would fail to be an immersion.) Furthermore, conformality of $F$
is equivalent to the following nullity condition:
\begin{equation}\label{eq:nullity}
	f_1^2 + f_2^2 +\cdots+ f_n^2 = 0\ \ \ \text{on $D$}.
\end{equation}
Indeed, we have that 
\[
	4f_j^2 =  \left( F_{j,x} -\imath F_{j,y} \right)^2 = (F_{j,x})^2 - (F_{j,y})^2 - 2\imath F_{j,x} F_{j,y}.
\]
Summation over $j=1,\ldots,n$ gives
\[
	4 \sum_{j=1}^n f_j^2  = |F_x|^2-|F_y|^2 - 2\imath F_x\,\cdotp F_y.
\]
Comparing with the conformality conditions \eqref{eq:conformal} proves the claim. 

Since we know by Theorem \ref{th:CMIn} that a conformal immersion is harmonic 
if and only it parameterizes a minimal surface, this gives the following result.

%
%   ENNEPER-WEIERSTRASS THEOREM
%
\begin{theorem}[The Enneper-Weierstrass representation theorem]\label{th:EW}
Let $D$ be a connected domain in $\C$.
A map $F=(F_1,F_2,\ldots,F_n):D\to\R^n$ of class $\Cscr^2$ is a conformal minimal immersion if and only 
if the map $f=(f_1,f_2,\ldots,f_n)=\di F/\di z : D\to \C^n \setminus \{0\}$ is holomorphic and satisfies
the nullity condition \eqref{eq:nullity}.

Conversely, a holomorphic map $f=(f_1,f_2,\ldots,f_n):D\to \C^n\setminus \{0\}$ 
satisfying the nullity condition \eqref{eq:nullity} and the period vanishing conditions
\begin{equation}\label{eq:Rperiod}
	\Re \oint_C f\, dz=0\ \ \text{for every closed curve $C\subset D$}
\end{equation}
determines a conformal minimal immersion $F:D\to\R^n$ given by
\begin{equation}\label{eq:primitive}
	F(z)= c + 2\Re \int_{z_0}^z f(\zeta)\, d\zeta,\quad z\in D
\end{equation}
for any base point $z_0\in D$ and vector $c=(c_1,c_2,\ldots,c_n)\in\R^n$. 
\end{theorem}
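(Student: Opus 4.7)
My plan is to show that the theorem is essentially a packaging of the three characterizations already established: harmonicity in terms of the Wirtinger derivative, conformality as the nullity condition, and Theorem \ref{th:CMIn} (conformal harmonic $\Leftrightarrow$ conformal minimal). Both directions will reduce to verifying the correspondence $F\leftrightarrow f=\partial F/\partial z$.

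For the forward direction, I would start from a conformal minimal immersion $F=(F_1,\dots,F_n)\in\mathcal{C}^2(D,\R^n)$. By Theorem \ref{th:CMIn}, $F$ is harmonic, so each real component $F_j$ satisfies $\Laplace F_j = 4\,\partial \bar{\partial} F_j = 0$, whence $f_j=\partial F_j/\partial z$ is holomorphic. The fact that $F$ is an immersion together with the identity $|f|^2 = \tfrac14|\nabla F|^2$ (which comes from $4|f_j|^2=(F_{j,x})^2+(F_{j,y})^2$) shows that $f$ has no common zeros, so $f\colon D\to\C^n\setminus\{0\}$. Finally, conformality of $F$ is exactly the nullity condition \eqref{eq:nullity}, as the paper already derives by comparing $4\sum f_j^2=|F_x|^2-|F_y|^2-2\imath F_x\cdotp F_y$ with \eqref{eq:conformal}.

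For the converse, assume $f=(f_1,\dots,f_n)\colon D\to\C^n\setminus\{0\}$ is holomorphic, satisfies \eqref{eq:nullity} and \eqref{eq:Rperiod}. The first task is to show the definition \eqref{eq:primitive} is unambiguous: for any two paths $\gamma_1,\gamma_2\subset D$ from $z_0$ to $z$, the concatenation $C=\gamma_1-\gamma_2$ is a closed curve, and
\[
    \Re\int_{\gamma_1}f\,d\zeta - \Re\int_{\gamma_2}f\,d\zeta = \Re\oint_C f\,d\zeta = 0
\]
by \eqref{eq:Rperiod}. Hence $F\colon D\to\R^n$ is well defined and of class $\mathcal{C}^\infty$ (in fact real-analytic, since $f$ is holomorphic). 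Writing $2\Re\int_{z_0}^z f\,d\zeta = \int_{z_0}^z f\,d\zeta + \int_{z_0}^z \overline{f}\,d\bar\zeta$, we obtain $\partial F/\partial z = f$ and $\partial F/\partial \bar z=\bar f$.

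It then remains to verify the three properties. Harmonicity: $\Laplace F = 4\,\partial_{\bar z}\partial_z F = 4\,\partial_{\bar z} f = 0$, because $f$ is holomorphic. Immersivity: from $|f|^2=\tfrac14|\nabla F|^2$ and the hypothesis that $f$ has no zero, the gradient of $F$ never vanishes; combined with conformality (see below) this forces $F_u,F_v$ to be linearly independent, so $F$ is an immersion. Conformality: the nullity relation $\sum f_j^2 = 0$ is, via the identity quoted above, equivalent to $|F_x|=|F_y|$ and $F_x\cdotp F_y=0$, which is \eqref{eq:conformal}. Applying Theorem \ref{th:CMIn}, the conformal harmonic immersion $F$ is a conformal minimal immersion.

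The main subtle point is the single-valuedness of \eqref{eq:primitive}, which is precisely why the real-period condition \eqref{eq:Rperiod} is imposed; on simply connected $D$ it is automatic, but for general domains it is the key obstruction, so this is the step that most merits highlighting. Everything else is a direct translation between the real and complex pictures already developed in the preceding sections.
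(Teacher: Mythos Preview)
Your proof is correct and follows essentially the same route as the paper: the argument is given in the text immediately preceding and following the theorem, using the identity $\Laplace = 4\,\partial_{\bar z}\partial_z$ for harmonicity, the computation $4\sum f_j^2 = |F_x|^2 - |F_y|^2 - 2\imath F_x\cdotp F_y$ for conformality, and Theorem~\ref{th:CMIn} to link harmonicity with minimality, with the period condition \eqref{eq:Rperiod} ensuring that \eqref{eq:primitive} is well defined. Your handling of immersivity (via $|f|^2=\tfrac14|\nabla F|^2$ together with conformality to force $F_u,F_v$ linearly independent) is slightly more explicit than the paper's one-line remark, but the content is the same.
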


The real period vanishing conditions \eqref{eq:Rperiod} guarantee that the integral
in \eqref{eq:primitive} is well defined, in the sense that it is independent of the choice
of a path of integration in $D$ from the initial point $z_0\in D$ to the terminal point $z\in D$.
The imaginary components 
\begin{equation}\label{eq:flux}
	\Im \oint_C f\, dz  = \pgot(C) \in \R^n
\end{equation}
of the periods $\oint_C f\, dz$ define the {\em flux homomorphism} $\pgot:H_1(D,\Z)\to\R^n$. 
Indeed, by Green's formula (the planar version of Stokes's theorem)
the period $\oint_C f\, dz$ only depends on the homology
class $[C]\in H_1(D,\Z)$ of a closed path $C\subset D$. 

%
%   ON HOMOLOGY GROUP
%
\begin{remark}[The first homology group] 
If $D$ is a domain in $\R^2\cong \C$ then its first homology group $H_1(D,\Z)$ 
is a free abelian group $H_1(D,\Z)\cong \Z^\ell$ 
$(\ell\in \{0,1,2,\ldots\}\cup\{\infty\})$ with finitely or countably many generators.
If $D$ is bounded, connected, and its boundary $bD$ consists of $l_1$ Jordan curves 
$\Gamma_1,\ldots, \Gamma_{l_1}$ and $l_2$ isolated points (punctures) $p_1,\ldots, p_{l_2}$, 
then the group $H_1(D,\Z)$ has
$\ell = l_1+l_2-1$ generators which are represented by loops in $D$ based at any given 
point $p_0\in D$ which surround each of the $\ell$ holes of $D$. By a {\em hole},
we mean a bounded (hence compact) connected component of the complement $\C\setminus D$.
Indeed, if $\Gamma_1$ is the outer boundary curve of $D$, then every other boundary curve 
$\Gamma_2,\ldots, \Gamma_{l_1}$ of $D$ also bounds a hole of $D$, and each of the
points $p_1,\ldots,p_{l_2}$ is a hole.  Every hole contributes one generator to $H_1(D,\Z)$. 
The same loops then generate the fundamental group $\pi_1(D,p_0)$ 
as a free nonabelian group. (In general, the first homology group $H_1(D,\Z)$ is the abelianisation
of the fundamental group $\pi_1(D,p_0)$.) A similar description of $H_1(D,\Z)$ holds for every surface, 
except that its genus enters the picture as well; 
see \cite[Sect.\ 1.4]{AlarconForstnericLopez2021book}. 
For basics on homology and cohomology, see J.\ P.\ May \cite{May1999}. 
\qed\end{remark}

It is clear from Theorem \ref{th:EW} that the following quadric complex hypersurface in $\C^n$ 
plays a special role in the theory of minimal surfaces in Euclidean spaces: 
\begin{equation}\label{eq:nullquadric}
	\nullq = \nullq^{n-1}
	= \bigl\{(z_1,\ldots,z_n) \in\C^n : z_1^2+z_2^2 + \cdots + z_n^2 =0\bigr\}.
\end{equation}
This is called the {\em null quadric} in $\C^n$.
Removing the origin, we get the {\em punctured null quadric}
$\nullq_*=\nullq\setminus\{0\}$. Note that $\nullq$ is a complex cone with
the only singular point at $0$, since the differential of the defining function is
nonzero elsewhere. Theorem \ref{th:EW} says that we get all conformal
minimal surfaces in $\R^n$ which are parameterized by a domain $D\subset \C$
as integrals (primitives) of holomorphic maps $f:D\to \nullq_* \subset \C^n$
satisfying the real period vanishing conditions \eqref{eq:Rperiod}.

Let us now look at a family of immersed holomorphic curves in $\C^n$ which are close relatives
of conformally immersed minimal surfaces in $\R^n$.

%
%   HOLOMORPHIC NULL CURVES
%
\begin{definition} \label{def:nullcurve}
A holomorphic immersion $F=(F_1,\ldots,F_n):D\to \C^n$ for $n\ge 3$ from a domain $D\subset \C$
satisfying the nullity condition
\[
	(F'_1)^2 + (F'_2)^2 + \cdots + (F'_n)^2 =0
\]
is a {\em holomorphic null curve}\index{holomorphic null curve} in $\C^n$.
\end{definition}

Hence, the derivative $f=F':D\to \C^n_*$ of a holomorphic null curve
assumes values in the punctured null quadric $\nullq^{n-1}_*$ \eqref{eq:nullquadric}.
For any closed curve $C\subset D$ we clearly have $\oint_C fdz= \oint_C dF=0$. 
Conversely, a holomorphic map $f:D\to\nullq^{n-1}_*$ satisfying the 
complex period vanishing conditions 
\begin{equation}\label{eq:Cperiods}
	\oint_C fdz =0\quad \text{for every closed curve $C\subset D$}
\end{equation}
integrates to a holomorphic null curve
\begin{equation}\label{eq:nullcurve}
	F(z)=c + \int_{z_0}^z f(\zeta)d\zeta,\qquad z\in D,
\end{equation}
where $z_0\in D$ is any given base point and $c\in\C^n$. Indeed, the 
conditions \eqref{eq:Cperiods} guarantee that the integral in \eqref{eq:nullcurve} 
is independent of the choice of a path of integration.

If $f:D\to\C^n$ is a holomorphic map then by Green's formula the period 
$\oint_C f\, dz\in\C^n$ only depends on the homology class $[C]\in H_1(D,\Z)$ of $C$. 
In particular, these periods vanish if the domain has trivial homology group
$H_1(D,\Z)=0$, which is equivalent to $D$ being simply connected.
According to the Riemann mapping theorem, there are precisely two such 
domains up to biholomorphisms: the plane $\C$ and the disc $\D=\{|z|<1\}$.

%
%   EW FOR SIMPLY CONNECTED DOMAINS
%
\begin{corollary}\label{cor:EW}
If $D$ is a simply connected domain in $\C$ then every holomorphic map 
$f:D\to\nullq_*\subset \C^n$ determines a holomorphic null curve by the formula \eqref{eq:nullcurve}.
\end{corollary}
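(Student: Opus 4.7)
The plan is to reduce the statement to the period vanishing condition \eqref{eq:Cperiods}, which was already shown in the discussion preceding the corollary to be sufficient for integrating $f$ into a holomorphic null curve via \eqref{eq:nullcurve}. Thus the only thing to verify is that simple connectivity of $D$ forces \eqref{eq:Cperiods} to hold automatically.

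First I would observe that $f = (f_1,\ldots,f_n): D \to \nullq_* \subset \C^n$ is by hypothesis holomorphic, hence each component $f_j$ is a holomorphic function on $D$. The differential $f_j(z)\,dz$ is therefore a closed holomorphic $1$-form on $D$ (equivalently, $d(f_j\,dz) = 0$ since $\di f_j/\di \bar z = 0$). By Cauchy's theorem, or equivalently Green's formula as invoked in the paragraph preceding the corollary, the period $\oint_C f_j\,dz$ depends only on the homology class $[C] \in H_1(D,\Z)$ of the closed curve $C$.

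Next, since $D$ is simply connected, the fundamental group $\pi_1(D,z_0)$ is trivial, and hence so is its abelianization $H_1(D,\Z) = 0$. Consequently every closed curve $C \subset D$ is null-homologous, so
\[
	\oint_C f\, dz = 0 \in \C^n \quad \text{for every closed curve } C \subset D.
\]
This is precisely the complex period vanishing condition \eqref{eq:Cperiods}. Therefore, for any choice of base point $z_0 \in D$ and constant $c \in \C^n$, the integral
\[
	F(z) = c + \int_{z_0}^z f(\zeta)\, d\zeta
\]
is independent of the path of integration in $D$ and defines a holomorphic map $F: D \to \C^n$ with $F'(z) = f(z)$ for all $z \in D$.

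Finally I would verify that $F$ is a holomorphic null curve in the sense of Definition \ref{def:nullcurve}. Since $f(z) \in \nullq_* = \nullq \setminus \{0\}$, we have $f(z) \neq 0$ at every point, so $F'(z) \neq 0$ and $F$ is a holomorphic immersion. Moreover, the nullity condition
\[
	\sum_{j=1}^n (F'_j(z))^2 = \sum_{j=1}^n f_j(z)^2 = 0
\]
holds identically on $D$ by the assumption that $f$ takes values in $\nullq$. Hence $F$ is a holomorphic null curve, completing the proof. There is no genuine obstacle here; the content of the corollary is essentially the topological observation that simple connectivity kills the obstruction to integrating a closed holomorphic $1$-form, and the rest is an immediate application of the setup already developed.
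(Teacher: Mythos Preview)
Your proof is correct and follows essentially the same approach as the paper: the paragraph immediately preceding the corollary already observes that periods depend only on homology classes and that simple connectivity of $D$ is equivalent to $H_1(D,\Z)=0$, so the corollary is stated there without a separate proof. Your write-up simply fills in the details of this observation and verifies the null-curve conditions explicitly, which is exactly the intended argument.
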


%
%   CONJUGATE AND ASSOCIATED MINIMAL SURFACES
%
If $Z=X+\imath Y:D\to \C^n$ is a holomorphic null curve then its real part
$X=\Re Z:D\to\R^n$ and its imaginary part $Y=\Im Z:D\to\R^n$ are conformal minimal surfaces.
Indeed, denoting the complex variable in $\C$ by $z=x+\imath y$ and taking into account 
the Cauchy-Riemann equations which are satisfied by a holomorphic map $Z$, we have that 
\[
	f=Z' = Z_x = X_x+\imath Y_x= X_x-\imath X_y = \frac12 \frac{\di X}{\di z}.
\]
Since $f=Z':D\to\nullq^{n-1}_*$ satisfies the nullity condition \eqref{eq:nullity}, 
$X$ is a conformal minimal immersion.
In the same way we find that $f=Z'=Y_y+\imath Y_x=\frac{\imath}{2} Y_z$,
so $Y$ is a conformal minimal immersion. Being harmonic conjugates 
of each other, $X$ and $Y$ are called {\em conjugate minimal surfaces}.
Conformal minimal surfaces in the $1$-parameter family
\[
	X^{\,t}=\Re(\E^{\imath \, t}Z) : D\lra \R^n,\quad\  t\in\R
\]
are called {\em associated minimal surfaces} of the holomorphic null curve $Z$. 

Conversely, if $X:D\to\R^n$ is a conformal minimal surface and 
$f=\frac12 \frac{\di X}{\di z}:D\to \nullq^{n-1}$ satisfies the period vanishing conditions 
\eqref{eq:Cperiods}, then $f$ integrates to a holomorphic null curve $Z:D\to\C^n$ \eqref{eq:nullcurve}
with $\Re Z=X$. Recall that the imaginary parts of the periods \eqref{eq:nullcurve} determine 
the {\em flux} of the minimal surface $X$; hence, $X$ is the real part of a holomorphic null curve 
if and only if it has vanishing flux. Note that the periods \eqref{eq:Cperiods} always vanish 
on a simply connected domain $D\subset \C$, and hence every conformal minimal immersion 
$D\to\R^n$ from such a domain is the real part of a holomorphic null curve $D\to\C^n$.

%
%  HELICATENOID
%
\begin{example}[Helicatenoid] \label{ex:helicatenoid}
Consider the holomorphic immersion $Z:\C\to\C^3$ given by
\begin{equation}\label{eq:helicatenoid}
	Z(z) = (\cos z,\sin z,-\imath z)\in \C^3, \quad\  z=x+\imath y\in\C.
\end{equation}
We have that 
\[
	f(z)=Z'(z)=(-\sin z,\cos z,-\imath),\quad \sin^2 z + \cos^2 z + (-\imath)^2=0.
\]
Hence, $Z$ is a holomorphic null curve. Consider its associated minimal surfaces in $\R^3$:
\begin{equation}\label{eq:helicatenoidassociated}
    X^{\,t}(z) = \Re\left( \E^{\imath \, t} Z(z) \right)
     = \cos t \left(
     \begin{matrix} \cos x \,\cdotp \cosh y \cr \sin x \,\cdotp \cosh y \cr y \end{matrix}\right)
      + \sin t \left(
      \begin{matrix} \sin x \,\cdotp\sinh y \cr -\cos x \,\cdotp\sinh y \cr x \end{matrix}\right).
\end{equation}
At $t=0$ we have a {\em catenoid} (see \eqref{eq:catenoid}), and at $t=\pm \pi/2$ 
we have a {\em helicoid} (see \eqref{eq:helicoid}).
Hence, these are conjugate minimal surfaces in $\R^3$. The holomorphic null curve 
\eqref{eq:helicatenoid} is called {\em helicatenoid}.
It is easily verified bthat the given parameterizations of these surfaces
are conformal; of course this also follows from the general theory explained above.
\qed\end{example}

%
%  DIMENSION 3, GAUSS MAP
%
\smallskip\noindent 
{\bf Weierstrass representation of minimal surfaces in $\R^3$.} 
Let us write $\di X=\frac{\di X}{\di z}dz$. In dimension $n=3$ the Enneper--Weierstrass 
representation formula for a conformal minimal immersion $X=(X_1,X_2,X_3):D\to\R^3$ 
can be written in the more concrete form (see \cite{Osserman1986} or 
\cite[pp.\ 107--108]{AlarconForstnericLopez2021book} for the details): 
\begin{equation}\label{eq:EWR3}
	X(z) = X(z_0) + 2 \Re \int_{z_0}^z
	\left( \frac{1}{2} \Big(\frac{1}{\ggot}-\ggot\Big),
	 \frac{\imath}{2} \Big(\frac{1}{\ggot}+\ggot\Big),1\right) \di X_3,
\end{equation}
where 
\begin{equation}\label{eq:CGauss0}
	\ggot =  \frac{\di X_3}{\di X_1 -\imath \, \di X_2} : D \lra \CP^1
\end{equation}
is a holomorphic map to the Riemann sphere (a meromorphic function on $D$)
called the {\em complex Gauss map} of $X$. 
It is easily seen that the $\C^3$-valued meromorphic $1$-form 
\[
	\Phi=(\phi_1,\phi_2,\phi_3) 
	=  \left( \frac{1}{2} \Big(\frac{1}{\ggot}-\ggot\Big),
	 \frac{\imath}{2} \Big(\frac{1}{\ggot}+\ggot\Big),1\right) \di X_3
\]
in \eqref{eq:EWR3} has no zeros and poles on $M$ if and only if the following two conditions hold:
\begin{itemize}
\item  if at some point $p\in D$ the meromorphic function $\ggot$ has either a zero or a pole of
order $k\in\N$, then $\phi_3=\di X_3$ has a zero at $p$ of the same order $k$, and
\item $\phi_3$ does not have any other zeros (since those would also be zeros of $\Phi$).
\end{itemize}
The complex Gauss map \eqref{eq:CGauss0} corresponds to the classical Gauss map 
$\bN:D\to S^2$ of the immersed minimal surface $X:D\to\R^3$, defined by
\[
	\bN = \frac{X_x \times X_y}{|X_x \times X_y|}, 
\]
provided that we identify the $2$-sphere $S^2\subset \R^3$ with the Riemann sphere $\CP^1$
via the stereographic projection from the point $(0,0,1)\in S^2$. See 
\cite{AlarconForstnericLopez2021book,Osserman1986} for further details. 

One of the most interesting and important features of the complex Gauss map is that the total 
Gaussian curvature $\TC(X)$ (see \eqref{eq:TC}) of a conformal minimal surface $X:D\to\R^3$ equals the 
negative spherical area of the image of the Gauss map $\ggot:D\to\CP^1$ (counted with multiplicities),
where the area of $\CP^1=S^2$ is $4\pi$. Explicitly:
\begin{equation}\label{eq:TCarea}
	\TC(X) = - \Area(\ggot(D)).
\end{equation}
It is a recent result that every holomorphic map $D\to \CP^1$ is the complex Gauss map
of a conformal minimal immersion $X:D\to\R^3$; %  (which need not be unique);
see \cite{AlarconForstnericLopez2019JGEA} or \cite[Theorem 5.4.1]{AlarconForstnericLopez2021book}.
Hence, the total Gaussian curvature of a minimal surface can be any number in $[-\infty,0]$.

%
%   FTC SURFACES
%
\medskip
\noindent{\bf Minimal surfaces of finite total curvature.} 
Recall from \eqref{eq:TC} that the total curvature of a minimal surface $S\subset \R^3$ 
is the integral $\TC(S)=\int_S K\,\cdotp dA$ of the Gaussian curvature function $K\le 0$ 
with respect to the surface area. If $X:D\to \R^n$ is a minimal immersion,
we have $\TC(X)=\int_D K dA$ where $dA$ is the area measure of the
Riemannian metric $g=X^*ds^2= \sum_{i=1}^n (dX_i)^2$ on $D$ (the first fundamental form of $X$).
The Gaussian curvature function $K:D\to (-\infty,0]$ of the immersion $X$ is determined solely in terms
of the metric $g$ according to Gauss's famous {\em Theorema Egregium}
(the {\em wonderful theorem}). 
An immersed minimal surface $X:D\to\R^3$ is said to have {\em finite total curvature} if 
\[	
	\TC(X)>-\infty.    
\]
The immersed surface $S=X(D)$ is said to be {\em complete} if the $X$-image of any curve in 
$D$ that is not contained in a compact subset of $D$ has infinite Euclidean length in $\R^n$.
This is equivalent to asking that the distance function on $D$ induced by the Riemannian
metric $g=X^*ds^2$ is a complete metric.

If a minimal surface $X:D\to\R^3$ is both complete and of finite total curvature $\TC(X)>-\infty$,
then the domain $D$ must be equal to the complement $\C\setminus \{p_1,\ldots,p_m\}$ 
of finitely many points in $\CP^1$ (or in any compact Riemann surface) 
and the Gauss map $\ggot$ has a pole at each of these punctures (including at $\infty$),
so it extends to a holomorphic map $\ggot:\CP^1\to \CP^1$. 
This is a special case of a classical theorem due to Chern and Osserman \cite{ChernOsserman1967JAM} 
from 1967 which describes complete minimal surfaces in $\R^n$ of finite total curvature.
In such case, the extended Gauss map $\ggot:\CP^1\to \CP^1$ has a well-defined degree 
$\deg(\ggot)\in \Z_+$ which equals the number of points on any fibre $\ggot^{-1}(z)$, $z\in\CP^1$, 
counted with multiplicities. The area of the image of $\ggot$ is then equal to 
$\deg(\ggot)$ times the area of $\CP^1$ (which is $4\pi$), so the formula \eqref{eq:TCarea} 
implies 
\begin{equation}\label{eq:TC4pi}
	\TC(X) = -  4\pi \deg(\ggot).
\end{equation}
That is, {\em if a complete conformal minimal surface $X:D\to \R^3$ has finite total Gaussian curvature, 
then this total curvature is a nonnegative integer multiple of $-4\pi$.} 
The case $\TC(X)=0$ corresponds to planes. Conversely, if the Gauss map of a conformal minimal surface
$X:D=\C\setminus \{p_1,\ldots,p_m\} \to\R^3$ extends to 
a holomorphic map $\CP^1\to\CP^1$ with a pole at each point $p_j$ and
at $\infty$, then $X$ is complete and has finite total curvature.

%
%   CALABI-YAU
%
\medskip
\noindent{\bf On the Calabi-Yau problem for minimal surfaces.} 
Note that every {\em proper} immersion $X:D\to\R^n$ (i.e., such that every sequence
$p_i\in D$ which diverges to $bD$ and has no limit points inside $D$ is mapped 
to a sequence $X(p_i)\in\R^n$ diverging to $\infty$) is complete in the sense described above.
However, if $D$ is a bounded domain in $\C$ with piecewise smooth boundary,
there also exist conformal minimal immersions $X:D\to\R^n$ for any $n\ge 3$ which are bounded
(i.e., such that the image $X(D)$ lies in a ball of $\R^n$) and complete. Such an immersion must be
highly oscillating at each boundary point of $D$. Nevertheless, it is possible to choose $X$
to extend continuously to $\overline D$ and such that $X(bD)$ is a finite collection
of pairwise disjoint Jordan curves in $\R^n$. If $n\ge 5$ then $X$ can even be chosen a 
topological embedding $\overline D\hra \R^n$. 
The mentioned results belong to the scope of problems around
the famous {\em Calabi-Yau problem for minimal surfaces}; we refer 
to \cite{AlarconForstneric2019JAMS} and
\cite[Chapter 7]{AlarconForstnericLopez2021book} for surveys of this subject.

%%%%%%%%%%%%%%%%%%%%%%%%
%
%   EXAMPLES
%
%%%%%%%%%%%%%%%%%%%%%%%%

\section{A few examples of minimal surfaces}\label{sec:examples}

We conclude by mentioning a few of the simplest examples of minimal surfaces, 
show their conformal parameterizations and the Weierstrass representation.
More precise descriptions of these and other examples, including also their illustrations, 
can be found in \cite[Sect.\ 2.8]{AlarconForstnericLopez2021book} and in many other sources
mentioned in the introduction. Nonorientable minimal surfaces are treated in the
recent publication \cite{AlarconForstnericLopezMAMS}.

%
%  CATENOID    
%
\smallskip
\noindent {\bf The catenoid} is obtained by rotating the {\em catenal curve} 
in $\R^2$ (the graph of the hyperbolic cosine function) around a suitable axis in $\R^3$.
It was described by Leonhard Euler in 1744 and characterized by
Pierre Ossian Bonnet in 1860 as the only rotational minimal surface in $\R^3$,
besides the plane. For example, by rotating the catenal curve 
$\R\ni z\mapsto (\cosh z,0,z)\in\R^3$ around the $z$-axis we obtain 
the catenoid in $\R^3$  given by the implicit equation
\begin{equation}\label{eq:catenoid}
	x^2+y^2=\cosh^2 z.
\end{equation}
Other catenoids are obtained from this model one by rigid motions and dilations.
For example, dilating the coordinates by the factor $c>0$ gives the family
of catenoids
\begin{equation}\label{eq:catenoid2}
	x^2+y^2=c^{-2} \cosh^2 (cz).
\end{equation}
A conformal parameterization of this catenoid is given by the map $X:\R^2 \to \R^3$,
\begin{equation}\label{eq:catenoidparametric}
	X(u,v)= \left(\cos u \,\cdotp \cosh v, \sin u \,\cdotp \cosh v, v\right).
\end{equation}
This is the real part of the holomorphic null curve $Z=X+\imath Y:\C\to\C^3$ given by
%
%  HELICATENOID
%
\begin{equation}\label{eq:helicatenoid2}
	Z(\zeta) = (\cos\zeta,\sin\zeta,-\imath \zeta)\in \C^3, \quad\  \zeta=u+\imath v\in\C,
\end{equation}
called {\em helicatenoid} (see Example \ref{ex:helicatenoid}). 
The Enneper--Weierstrass representation of the helicatenoid is 
\begin{eqnarray*}
	Z(\zeta) &=& (1,0,0)+ \int_0^\zeta (-\sin \xi,\cos\xi,-\imath)\, d\xi \\
	&=& (1,0,0)+ \int_0^\zeta \left(\frac12 \left(\frac{1}{\E^{\imath \xi}} - \E^{\imath \xi}\right),
	 \frac{\imath}{2} \left(\frac{1}{\E^{\imath \xi}} + \E^{\imath \xi}\right), 1 \right) (-\imath)d\xi
\end{eqnarray*}
with $\zeta\in\C$. Comparing with \eqref{eq:EWR3} we see that
the Gauss map of the helicatenoid, and hence of all its associated
minimal surfaces \eqref{eq:helicatenoidassociated}, is $\ggot(\zeta) = \E^{\imath \zeta}$.

The parameterization of the catenoid given by \eqref{eq:catenoidparametric} 
is $2\pi$-periodic in the $u$ variable,
hence infinitely sheeted. By introducing the variable $w=\E^{\imath \zeta} = \E^{-v+\imath u} \in \C^*$,
we pass to the quotient $\C/2\pi\,\Z\cong\C^*$ and obtain a single sheeted parameterization
$F:\C^* \to \R^3$ of the same catenoid having the Weierstrass representation
\begin{equation}\label{eq:WRcatenoid}
	F(w) = (1,0,0) - \Re \int_1^w \left(\frac12 \Big(\frac1{\eta}-\eta\Big),
	\frac{\imath}{2} \Big(\frac1{\eta}+\eta\Big),1 \right)\frac{d\eta}{\eta}.
\end{equation}
(We introduced the variable $\eta=\E^{\imath\xi}$ into the integral for $Z(\zeta)$.
This gives the same parametric expression $X(u,v)$ \eqref{eq:catenoidparametric}
in terms of the local conformal coordinates $(u,v)=(\mathrm{Arg}(w),-\log|w|)$.)
From the formula \eqref{eq:EWR3} we see that the
complex Gauss map \eqref{eq:CGauss0} of the catenoid parameterized by \eqref{eq:WRcatenoid} is
\[
	\ggot(w)=w, \quad\ w\in\C^*,
\]
so it extends to the identity map $\CP^1\to \CP^1$ of degree $1$.
It follows from \eqref{eq:TC4pi} that the catenoid has total Gaussian curvature equal to $-4\pi$.
In fact, the catenoid is the only surface in the family \eqref{eq:helicatenoid2}
which factors through $\C^*$, has meromorphic Weierstrass data and is of finite total curvature;
all other surfaces in the family \eqref{eq:helicatenoid2} with $t\notin \pi\Z$ are transcendental and
of infinite total curvature.

The catenoid is the most paradigmatic example in the theory of minimal surfaces, and a list
of its major properties can be found in \cite[Subsect.\ 2.8.1]{AlarconForstnericLopez2021book}.

%
%   EXAMPE - NONUNIQUENESS
%
\begin{example}\label{ex:nonuniqueness}
The family of catenoids \eqref{eq:catenoid2} shows that 
the Plateau boundary value problem can have more than one solution.
We wish to find a piece of a catenoid from \eqref{eq:catenoid2} whose boundary 
consists of the circles $x^2+y^2=r^2$ in the planes $z=1$ and $z=-1$.
The equation for the boundary values is $r^2 = c^{-2}\cosh^2(c)$ or
\[
	 \cosh c = r c,\quad c>0.
\]
There is a number $r_0 \approx 1.50887954\cdots$ such that this equation
has two solutions for $r>r_0$, one double solution for $r=r_0$, and no solutions for  $r<r_0$.
Hence, for values $r>r_0$ we have two catenoids satisfying these boundary conditions.
As the radius $r$ of the circles decreases, the catenoid brakes at the threshold value $r_0$ 
and there is no catenoid (and in fact no minimal surface) connecting this pair of circles for $r<r_0$. 
\qed\end{example}

%
%  HELICOID
%
\medskip
\noindent {\bf The helicoid} was described by Leonard Euler in 1774 and
Jean Baptiste Meusnier in 1776. Geometrically, it is generated by rotating a line in a
plane of $\R^3$ and simultaneously displacing it in the perpendicular direction which is the axis of rotation.
Therefore, it is invariant under a one parameter family of screw motions around the axis of
rotation, and consequently it is foliated by helices; hence its name.

Let $Z:\C\to\C^3$ be the helicatenoid \eqref{eq:helicatenoid2}.
From \eqref{eq:helicatenoidassociated} we obtain the following
conformal  parameterization of the helicoid $Y=-\Im Z = \Re(\imath Z):\R^2\to\R^3$:
\begin{equation}\label{eq:helicoid}
	 Y(u,v) = (\sin u \,\cdotp\sinh v, -\cos u \,\cdotp\sinh v, u).
\end{equation}
Its Weierstrass representation is
\[
	Y(\zeta)= \Re \int_0^\zeta
	\left(\frac12 \left(\frac{1}{\E^{\imath \xi}} - \E^{\imath \xi}\right),
	 \frac{\imath}{2} \left(\frac{1}{\E^{\imath \xi}} + \E^{\imath \xi}\right), 1 \right) d\xi,
	 \quad\ \zeta\in\C.
\]
Since its complex Gauss map $\ggot(\zeta)=\E^{\imath \zeta}$ (see \eqref{eq:EWR3}) is transcendental,
the total curvature of the helicoid equals $-\infty$.

%
%  Enneper's Surface    
%
\medskip
\noindent {\bf Enneper's surface} was discovered by Alfred Enneper in 1868.
It is one of the two most basic minimal surfaces in $\R^3$ from the point of view of the
Enneper--Weierstrass representation, the other one being the catenoid.
Its parametric equations  are
\[	
	X(u,v)=
 	\left( \frac{u}{3}\big(3 (1 + v^2)-u^2\big), \frac{v}{3} \big(v^2 -3(1 + u^2)\big), u^2 - v^2\right),
\]
and the Weierstrass representation is given by
\[
	X:\C\to \R^3, \qquad 
	X(\zeta) =\Re \int_0^\zeta \left( 1-\xi^2,\imath(1+\xi^2), 2\xi \right) d\xi.
\]
Hence, the complex Gauss map is $\ggot(\zeta)=\zeta$, and \eqref{eq:TC4pi} shows that
the total Gaussian curvature equals $\TC(X)=-4\pi$. 
It turns out that Enneper's surface is conjugate to itself.
Besides the catenoid, Enneper's surface is the only complete minimal surface
in $\R^3$ of total Gaussian curvature $-4\pi$ (see Osserman \cite{Osserman1986}).

%
%  MEEKS MINIMAL MOBIUS STRIP
%
\medskip
\noindent {\bf Meeks's minimal M{\"o}bius Strip} was discovered by William H.\ Meeks \cite{Meeks1981} in
1981. It was the first known example of a nonorientable properly immersed minimal surface in $\R^3$.
Its orientable double cover is parameterized by the harmonic map $X:\C^* \to \R^3$ with 
the following Weierstrass representation formula (see Meeks \cite[Theorem 2]{Meeks1981}):
\[
	X(\zeta)=\Re\int_1^\zeta \left( \frac{z-1}{z^2 (z+1)}-\frac{z^2 (z+1)}{z-1},
	\imath \left(\frac{(z+1) z^2}{z-1}+\frac{z-1}{z^2 (z+1)}\right),2 \right) \frac{\imath (z^2-1)}{2 z^2} dz,
\]
where $\Igot:\C^*\to \C^*$, $\Igot(z)=-1/\bar z$, is the associated antiholomorphic deck transformation.
The Gauss map $\ggot : \CP^1\to \CP^1$ of $X$ equals 
\[
	\ggot(z)=\frac{(z+1) z^2}{z-1}.
\] 
Clearly it has degree $3$, so $X$ has total Gaussian curvature $-12\pi$ by \eqref{eq:TC4pi}. 
Since the map $X$ covers the M\"obius strip exactly twice, the latter has total curvature $-6\pi$.
As shown by Meeks in \cite[Sect.\ 4]{Meeks1981}, this is the only complete nonorientable immersed
minimal surface in $\R^3$ with the absolute total Gaussian curvature smaller than $8\pi$.

%
%  MEEKS MINIMAL MOBIUS STRIP
%
\medskip
\noindent {\bf The Alarc\'on--Forstneri\v c--L\'{o}pez M{\"o}bius Strip} is the first known 
example of a {\em properly embedded} nonorientable minimal surface
in $\R^4$. It was found in 2017 and is described in 
\cite[Example 6.1]{AlarconForstnericLopezMAMS}. 
No such example exists in $\R^3$ according to Meeks \cite[Corollary 2]{Meeks1981}.

Let $\Igot : \C^*\to\C^*$ be the fixed-point-free antiholomorphic involution
$\Igot(\zeta)= -1/{\bar\zeta}$. The harmonic map $X :  \C^*\to\R^4$ defined by
\[
	X(\zeta)=\Re\left( \imath\Big(\zeta+\frac1{\zeta}\Big) \,,\, \zeta-\frac1{\zeta}  \,,\,
	\frac{\imath}2 \Big(\zeta^2-\frac1{\zeta^2}\Big)  \,,\, \frac12 \Big(\zeta^2+\frac1{\zeta^2}\Big) \right)
\]
is an $\Igot$-invariant proper conformal minimal immersion such that 
$X(\zeta_1)=X(\zeta_2)$ if and only if $\zeta_1=\zeta_2$ or $\zeta_1=\Igot(\zeta_2)$.
Hence, the image surface $S=X(\C^*)\subset \R^4$ is a properly embedded minimal
M\"obius strip in $\R^4$. It has total Gaussian curvature $-4\pi$.

%
%
%   ACKNOWLEDGMENTS
%
%
\subsection*{Acknowledgements}
Research is supported by the research program P1-0291 and the grant J1-9104
from ARRS, Republic of Slovenia. I wish to thank Finnur L\'arusson for his 
remarks and suggestions which led to improved presentation.

%%%%%%%%%%
%%%%%%%%%%
%%%%%%%%%%
%%%%%%%%%%   THE BIBLIOGRAPHY
%%%%%%%%%%
%%%%%%%%%%

%{\bibliographystyle{abbrv} \bibliography{references}} 

%%%%%%%%%%
%%%%%%%%%%
%%%%%%%%%%
%%%%%%%%%%   AFFILIATIONS
%%%%%%%%%%
%%%%%%%%%%

\vspace*{5mm}
\noindent Franc Forstneri\v c

\noindent Faculty of Mathematics and Physics, University of Ljubljana, Jadranska 19, SI--1000 Ljubljana, Slovenia

\noindent 
Institute of Mathematics, Physics and Mechanics, Jadranska 19, SI--1000 Ljubljana, Slovenia.

\noindent e-mail: {\tt franc.forstneric@fmf.uni-lj.si}

\end{document}